\title[Mapping class groups and Stein fillings]{Mapping class group relations, Stein fillings, and planar open book decompositions}
\author{andy wand}
\date{}
\newcommand{\pg}{\Sigma}
\newcommand{\mon}{\varphi}
\newcommand{\mcg}{\Gamma_\pg}
\newcommand{\obd}{(\Sigma,\varphi)}
\newcommand{\F}{\mathcal{F}}
\newcommand{\fac}{\tau_{\alpha_n} \cdots \tau_{\alpha_1}}
\newtheorem*{thm_1}{Theorem \ref{thm_lf}}
\newtheorem*{thm_2}{Theorem \ref{thm_obstruction}}
\newtheorem*{cor_1}{Corollary \ref{cor_obstruction_restatement}}
\newtheorem{thm}{Theorem}[section]
\newtheorem{cor}[thm]{Corollary}
\newtheorem{lem}[thm]{Lemma}
\theoremstyle{definition}
\newtheorem{defin}[thm]{Definition}
\theoremstyle{remark}
\newtheorem{remark}[thm]{Remark}
\newtheorem{example}[thm]{Example}
\newtheorem{obs}[thm]{Observation}
\begin{document}

\begin{abstract}
The aim of this paper is to use mapping class group relations to approach the `geography' problem for Stein fillings of a contact 3-manifold. In particular, we adapt a formula of Endo and Nagami so as to calculate the signature of such fillings as a sum of the signatures of basic relations in the monodromy of a related open book decomposition. We combine this with a theorem of Wendl to show that for any Stein filling of a contact structure supported by a planar open book decomposition, the sum of the signature and Euler characteristic depends only on the contact manifold. This gives a simple obstruction to planarity, which we interpret in terms of existence of certain configurations of curves in a factorization of the monodromy. We use these techniques to demonstrate examples of non-planar structures which cannot be shown non-planar by previously existing methods.
\end{abstract}

\maketitle

\let\thefootnote\relax\footnotetext{2010 Mathematics Subject Classification. Primary 57R17 \\
Key words and phrases: contact structure, Stein filling, open book decomposition, signature, mapping class group, Lefschetz fibration, relation, signature
cocycle, support genus}

\section{Introduction}
In recent years, a large body of work has brought to light surprising connections between open book decompositions, contact manifolds, Lefschetz fibrations, and symplectic and Stein manifolds. Giroux \cite{gi} has demonstrated a 1-1 correspondence between stabilization classes of open book decompositions and contact 3-manifolds up to isotopy of the contact structure, and further shown that such a manifold has a Stein filling if and only if the monodromy of some open book decomposition associated to it through this correspondence has a factorization into positive Dehn twists. Work of Giroux and others (in particular Loi and Piergallini \cite{ao} and Akbulut and Ozbagci \cite{lp}) has further shown that such a factorization defines a Lefschetz fibration of a 4-manifold filling of the contact manifold. In the case of a factorization into twists along homologically non-trivial curves, this defines a Stein structure on the filling and an induced contact structure on the boundary which agrees with the original structure. Conversely, any Stein filling induces such a Lefschetz fibration and open book decomposition.

Via the above framework, one may translate questions concerning Stein fillings of a given contact manifold into questions concerning positive factorizations of the set of monodromies of its open book decompositions. It is however generally quite difficult to understand how the sets of possible factorizations of stabilization-equivalent open book decompositions are related.

In this paper, we are concerned with \emph{curve configurations} in a given mapping class $\mon$, by which we mean any subword of a positive factorization of $\mon$ into Dehn twists. In particular, we are motivated by the idea of using an understanding of possible curve configurations in the monodromy of a given open book decomposition to understand properties of its stabilization class (such as support genus, see e.g. \cite{eo}) and also the set of Stein fillings of the supported contact manifold.

In Section \ref{sec_sig}, we adapt Endo and Nagami's \cite{en} notion of the \emph{signature} $I(r)$ of a relator $r$ in the mapping class group (itself a generalization of the Meyer cocycle \cite{m}) to the setting of contact structures and Stein fillings. This gives a simple method of calculating the effect of changing a factorization of the monodromy of an open book decomposition on the Euler characteristic and signature of the associated filling. In particular, we are able to show:

\begin{thm_1}
Let $\pg = \pg_{g,b}$ be a surface with boundary, and $X_{\pg, \lambda}, X_{\pg, \lambda'}$ Lefschetz fibrations over $D^2$, where $\lambda'$ is an $r$-substitution of $\lambda$. Then $$\sigma(X_{\pg, \lambda'}) - \sigma(X_{\pg, \lambda}) = I(r).$$
\end{thm_1}

As an application, we find that well-known presentations of the mapping class group restrict the `geography' of Stein fillings of a contact 3-manifold.

This technique also gives more stringent restrictions on the set of Stein fillings associated to a contact structure via a \emph{given} supporting open book decomposition. These restrictions, however, are not in general preserved by stabilization of the open book, and as such are not in general properties of the contact structure itself. Indeed, in \cite{wand} we constructed examples of positive open book decompositions for which stabilization enlarges the set of related Stein fillings. If, however, contact $(M,\xi)$ is supported by \emph{planar} open book decomposition $\obd$, then the situation is somewhat more restrictive, due to a recent result of Wendl \cite{we} which in effect says that we do not have to stabilize $\obd$; the set of fillings related to $\obd$ is exactly the set of fillings of $(M,\xi)$. This result allows us to use the above restrictions to demonstrate new obstructions to a contact structure being supported by a planar open book through existence of particular curve configurations in any supporting positive open book. We have:

\begin{thm_2}
If $X_1,X_2$ are Stein fillings of planar $(M,\xi)$, then $e(X_1) + \sigma(X_1) = e(X_2) + \sigma(X_2)$.
\end{thm_2}

Letting $n(r)$ denote the total exponent of all simple closed curves in $r$, we then interpret this in terms of curve configurations as:

\begin{cor_1}
Let $(M,\xi)$ be supported by $\obd$, where $\mon \in Dehn^{+}(\pg)$. Then if $\mon$ admits allowable relator $r$ satisfying $I(r) + n(r) \neq 0$, then $(M,\xi)$ is not supported by a planar open book decomposition.  
\end{cor_1}

These obstructions are of a substantially different flavor than the known obstructions due to Etnyre \cite{et1} and  Ozsv\'{a}th, Stipsicz and Szab\'{o} \cite{oss}, and indeed we demonstrate simple examples of non-planar contact structures which cannot be shown non-planar through those obstructions. 

As a final comment, the dependence of these results on Wendl's theorem means that this approach, as is, has no hope of giving obstructions to support genus greater than zero. Furthermore, our above-mentioned earlier examples (constructed in \cite{wand}) are of genus 2, so there can be no analogue of Wendl's theorem for genus 2 or higher. The case of genus 1 remains unknown.

The organization of the paper is as follows. Sections \ref{sec_mcg} and \ref{sec_lef_obd} give basic definitions concerning mapping class groups, Lefschetz fibrations, and open book decompositions. In Section \ref{sec_sig} we recall Endo and Nagami's signature of a relation, adapting their concept for a more general setting. In Section \ref{sec_applications}, we combine this with Wendl's result to give necessary conditions on Stein fillings of planar contact structures, which we interpret in terms of existence of certain curve configurations in Section \ref{sec_obstructions}. Finally, we compare our results with those of  Etnyre \cite{et1} and  Ozsv\'{a}th, Stipsicz and Szab\'{o} \cite{oss} in Section \ref{sec_comparisons}.

\subsection*{Acknowledgements}
We would like to thank Burak Ozbagci, Ignat Soroko, and Chris Wendl for helpful comments on an earlier version of this paper, Andr\'{a}s Stipsicz for helpful email correspondence, the anonymous referee for careful reading and suggestions, and the Max Planck Institut f\"ur Mathematik for support.

\section{Mapping class groups and relators}\label{sec_mcg}
Let $\pg = \pg_{g,b}$ be a compact, orientable surface of genus $g$ with $b$ boundary components. The (restricted) mapping class group of $\pg$,
denoted $\mcg$, is the group of isotopy classes of orientation
preserving diffeomorphisms of $\pg$ which restrict to the identity
on $\partial\pg$. If $b=0$, i.e. $\pg$ is closed, we write simply $\pg = \pg_g$. We denote by $Dehn^{+}(\pg)$ the subset of mapping classes which admit factorizations into positive Dehn twists, and by $Fac^{+}(\mon)$ the set of such factorizations.

Denoting by $\F$ the free group generated by isotopy classes of simple closed curves on $\pg$, there is a natural homomorphism $g:\F \rightarrow \mcg$ sending a curve $\alpha$ to the positive Dehn twist $\tau_\alpha$ about $\alpha$. It is a classical theorem of Dehn that $g$ is surjective. We call each element $r$ of $Ker(g)$ a \emph{relator} in the generators of $\mcg$.

We have the following presentation of $\mcg$, due to Gervais \cite{g}, with some simplification due to Luo \cite{l}:
\begin{thm}\label{thm_relations}
For a compact oriented surface $\pg$, the
mapping class group $\mcg$ has the following presentation:\\
generators: $\{\tau_\alpha \ | \ \alpha \ \textrm{a simple closed curve in} \ \pg\}$.\\
relators:\\

(I) $\tau_\alpha$ for $\alpha$ the isotopy class
of a null homotopic loop.

(II) $\tau_\alpha \tau_\beta \tau^{-1}_\alpha \tau^{-1}_\beta$ for $\alpha \cap \beta = \emptyset.$

(III) $\tau_{\tau_\alpha \beta}  \tau_\alpha \tau^{-1}_\beta \tau^{-1}_\alpha$ (the braid relation)

(IV) $ \tau_{\alpha_{12}} \tau_{\alpha_{23}} \tau_{\alpha_{13}} \tau^{-1}_{\alpha_1} \tau^{-1}_{\alpha_2} \tau^{-1}_{\alpha_3} \tau^{-1}_{\alpha_4} $ for curves as in Figure \ref{fig_relations}(a) (the lantern relation)

(V) $ (\tau_\alpha \tau_\beta)^{6} \tau_\delta^{-1}$ for curves as in Figure \ref{fig_relations}(b) (the 2-chain relation)

\end{thm}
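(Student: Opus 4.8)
The plan is to prove the two directions separately: that each listed relator indeed lies in $Ker(g)$, and that these relators normally generate $Ker(g)$, so that the quotient of $\F$ by the normal closure of (I)--(V) is isomorphic to $\mcg$ under $g$. Surjectivity of $g$ is Dehn's theorem, already recalled above, so all the content is in identifying a complete set of relations.

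For the soundness direction, I would verify geometrically that each of (I)--(V) maps to the identity under $g$. Relations (I) and (II) are immediate, and (III) is the standard conjugation identity $\tau_{h(\alpha)} = h\,\tau_\alpha\, h^{-1}$ specialized to $h = \tau_\beta$. The lantern relation (IV) and the $2$-chain relation (V) are classical identities that can be checked on the relevant subsurface (a four-holed sphere, and a one-holed torus neighborhood of the chain, respectively), where the mapping class group is explicitly understood.

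The substantive direction is completeness. Here I would reduce to a known finite presentation --- for instance Wajnryb's presentation in terms of the Humphries generators $\tau_{c_0}, \dots, \tau_{c_{2g}}$ together with the boundary twists. Writing $N$ for the normal closure of (I)--(V), two things must be shown modulo $N$: first, that every generator $\tau_\alpha$ of the above presentation equals a word in the Humphries generators, and second, that every Wajnryb relation is a consequence of (I)--(V). For the first point I would invoke the change-of-coordinates principle: any simple closed curve $\alpha$ is carried to a standard Humphries curve by some $h \in \mcg$, and writing $h$ as a product of Humphries twists, the commutativity and braid relations (II)--(III) let me rewrite $\tau_\alpha = h\,\tau_{c_i}\,h^{-1}$ as the required word modulo $N$.

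The hard part is the second point: deriving each of Wajnryb's relations (the chain relations, the lantern relation, and --- for closed or high-genus surfaces --- the hyperelliptic relation) from (I)--(V). The chain and lantern relations are essentially built in, so the genuine obstacle is recovering the remaining relations, which requires explicit combinatorial manipulation of Dehn twist words supported on standard subsurfaces. An alternative that sidesteps much of this bookkeeping, and which I would pursue if the Tietze approach becomes unwieldy, is to let $\mcg$ act on the Hatcher--Thurston complex of cut systems: this complex is simply connected, so a standard presentation-from-action argument produces generators and relations from the vertex and edge stabilizers, and one then matches these against (I)--(V). Either route reduces the theorem to Luo's simplification of Gervais's argument, which is where I would ultimately anchor the proof.
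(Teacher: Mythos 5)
The paper does not actually prove Theorem \ref{thm_relations}: it is imported as a known result of Gervais \cite{g}, with the simplification of the relation set due to Luo \cite{l}, so there is no in-paper argument to compare yours against. Judged on its own, your outline correctly reconstructs the architecture of the proofs in the literature: soundness is checked on the supporting subsurfaces, and completeness is a Tietze-style reduction to a known finite presentation --- this is precisely Gervais's strategy (he derives the infinitely-presented statement from Wajnryb's presentation \cite{waj}), while Wajnryb's presentation is itself obtained from the action on the simply connected Hatcher--Thurston cut-system complex. So your two ``alternative'' routes are not really alternatives; they are the two layers of one and the same proof. One correction of emphasis: Luo's contribution, which you fold in at the end, is the reduction of Gervais's relation set (braid plus \emph{star} relations) to (I)--(V), and the pivotal derivation there is of the star relation from the lantern and 2-chain relations --- not the hyperelliptic relation you single out, which enters only through Wajnryb's closed-surface presentation.

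Two places where your sketch, if executed, would need more than you say. First, Wajnryb's presentation covers only surfaces with $b \le 1$ boundary components, whereas Theorem \ref{thm_relations} (and the paper's applications, e.g.\ to planar $\pg_{0,b}$ and to $\pg_{1,3}$ in Example \ref{ex_nonstandard}) requires arbitrary $b$; Gervais handles general boundary by an induction on capping and cutting, and your reduction must include that step. Second, your change-of-coordinates rewriting of $\tau_\alpha$ as $h\,\tau_{c_i}\,h^{-1}$ modulo the normal closure $N$ works only for $\alpha$ nonseparating, since separating curves are not in the mapping class group orbit of the Humphries curves; to express separating twists, and to verify relations involving them, you must first derive the $n$-chain relations from (I)--(V) modulo $N$ --- the mechanism is exactly the pasting-plus-Hurwitz-moves manipulation the paper illustrates in Example \ref{ex_2_chain}. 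Neither point is fatal, but as written your proposal is a roadmap that defers its substantive derivations and ultimately anchors in the very references \cite{g}, \cite{l} that the paper cites in lieu of a proof, so it should be regarded as a reconstruction of the citation rather than an independent argument.
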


In fact, Luo shows that one may replace relation (III) with the special case of $\#|\alpha \cap \beta| =1$; for our purposes the general case is more convenient.

\begin{figure}[h]
\centering \scalebox{.7}{\includegraphics{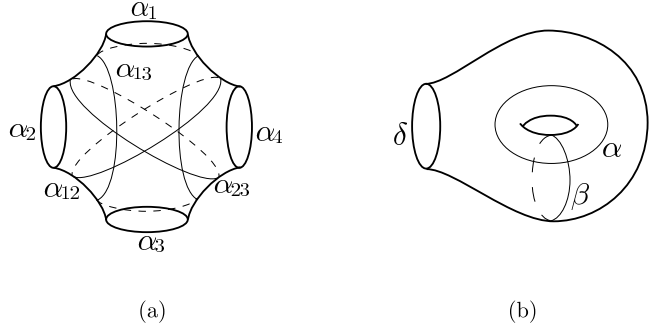}}
\caption[relations]{Curves involved in the lantern and 2-chain relations} \label{fig_relations}
\end{figure}

Suppose $r = \lambda_1^{-1} \lambda_2$ is a relator, and $\lambda$ a word in $\mcg$ which can be written $\lambda = \lambda_3 \lambda _1 \lambda _4$ (where each $\lambda_i$ is a positive word). Then we say $\lambda' = \lambda_3 \lambda_2 \lambda_4$ is an \emph{$r$-substitution} of $\lambda$. If $r$ is a braid relator, i.e. of type (III) above, an $r$ substitution is often referred to as a \emph{Hurwitz move}. We will unless otherwise specified be considering words only up to the relations of type (I), (II), and (III), as these preserve most of the information we will be interested in. Note that, with this convention, an $r$-substitution may always be viewed as concatenation of words. In particular, setting $r' = \lambda_4^{-1} \circ r \circ \lambda_4$, we may write the above $r$-substitution as $\lambda = \lambda_3 \lambda_1 \lambda_4  \overset{r'}{\rightsquigarrow} \lambda_3 \lambda_1 \lambda_4 r' = \lambda_3 \lambda_2 \lambda_4$.

It follows then that any $r$-substitution of a word $\lambda$ takes the form $\lambda (\lambda^{-1}_{1_1}$ $\lambda_{1_2}) \dots $ $(\lambda^{-1}_{n_1} \lambda_{n_2})$, where each $r_i := \lambda^{-1}_{i_1} \lambda_{i_2}$ is a relator of type (IV) or (V). We write $r = \prod r_i$.

\begin{example}\label{ex_2_chain}
As an example, consider the `3-chain' relator $r=\tau^{-1}_{\alpha_1} \tau^{-1}_{\alpha_2} (\tau_{\alpha_{23}} \tau_\alpha \tau_\beta)^4$, where curves are indicated in Figure \ref{fig_2_holed_torus} (the general definition of an $n$-chain relation, due to Wajnryb
 \cite{waj}, is given in Section \ref{sec_obstructions}). We may decompose $r$ as $r_1 r_2$, where $r_1$ is the lantern relator $\tau^{-1}_{\alpha_1} \tau^{-1}_{\alpha_2}\tau^{-1}_{\alpha_3} \tau^{-1}_{\alpha_4}  \tau_{\alpha_{12}} \tau_{\alpha_{23}} \tau_{\alpha_{13}}$ and $r_2$ the 2-chain $\tau_\delta^{-1}  (\tau_\alpha \tau_\beta)^{6}$. We recover $r$ by pasting the supporting surfaces along a common subsurface (here a pair of pants) so that $\alpha_{12} \simeq \delta$, and $\alpha_3 \simeq \alpha_4 \simeq \beta$, and performing a sequence of Hurwitz moves and cancellations.

\begin{figure}[h]
\centering \scalebox{.7}{\includegraphics{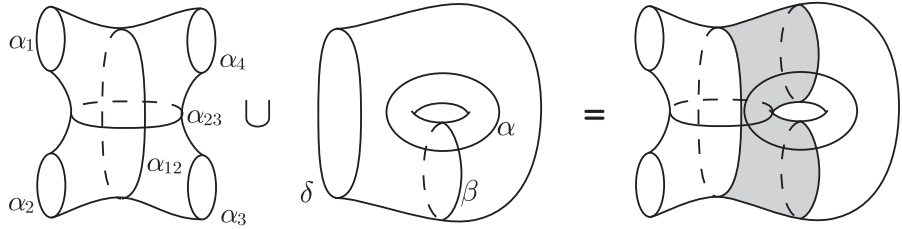}}
\caption[relations]{Decomposition of the 3-chain relator} \label{fig_2_holed_torus}
\end{figure}

\end{example}

\begin{example}\label{ex_nonstandard}
For a final example, which we include for later reference, we will demonstrate a `non-standard' relator $r_{ns}$ built from two lanterns and a 2-chain. Let $r_1 = \tau_{\alpha_1} \tau_{\alpha_2} \tau_{\alpha_3} \tau_{\alpha_4} \tau^{-1}_{\alpha_{13}} \tau^{-1}_{\alpha_{23}} \tau^{-1}_{\alpha_{12}}$, $r_2 = \tau_{\alpha'_{12}} \tau_{\alpha'_{23}} \tau_{\alpha'_{13}} \tau^{-1}_{\alpha'_1} \tau^{-1}_{\alpha'_2} \tau^{-1}_{\alpha'_3} \tau^{-1}_{\alpha'_4}$ and $r_3 = \tau_{\delta}^{-1}(\tau_{\beta}\tau_\alpha)^6$ be as in Figure \ref{fig_non_standard_relation}. Pasting the supporting surfaces for each of these relators together as indicated to get $\pg = \pg_{1,3}$, we have $\alpha_2 \simeq \alpha'_2$, $\alpha_3 \simeq \alpha'_3$, $\alpha_{23} \simeq \alpha'_{23}$, $\alpha'_1 \simeq \beta$, and $\alpha_4 \simeq \delta$. We then perform a sequence of cancellations to find that  $r_{ns}:=r_2r_1r_3=\tau_\alpha(\tau_{\beta} \tau_{\alpha})^5\tau_{\alpha'_{13}}\tau_{\alpha'_{12}}\tau_{\alpha_1}\tau^{-1}_{\alpha'_4}\tau^{-1}_{\alpha_{12}}\tau^{-1}_{\alpha_{13}}$, giving the mapping class equivalence in the lower two figures.

\begin{figure}[htb]
\labellist
\small\hair 2pt
 \pinlabel {$(a)$} [ ] at 151 192
 \pinlabel {$(b)$} [ ] at 442 192
 \pinlabel {$(c)$} [ ] at 132 -5
 \pinlabel {$(d)$} [ ] at 395 -5
 \pinlabel {\LARGE{$\leadsto$}} [ ] at 296 256

\endlabellist
\centering
\includegraphics[scale=.6]{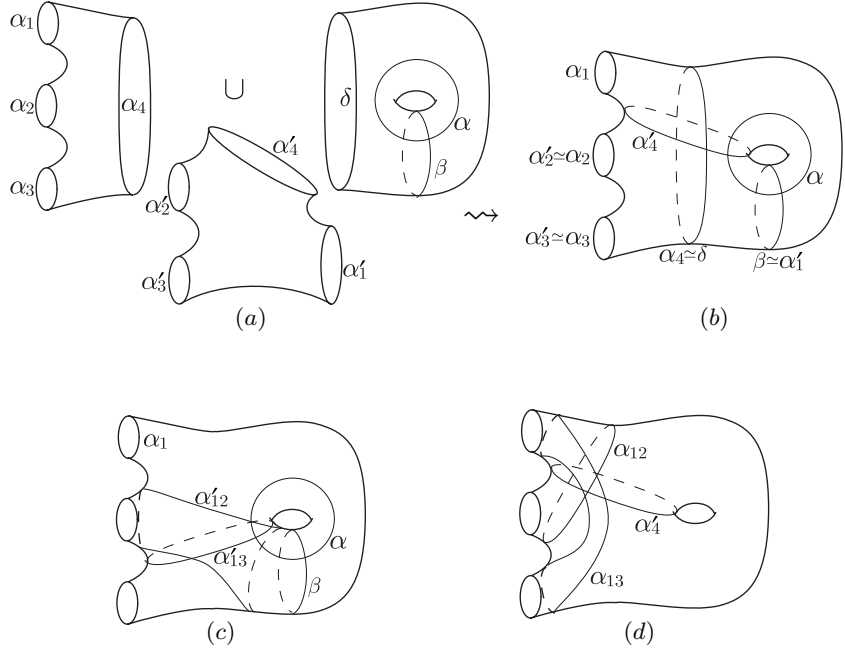}
\caption{(a), (b) The subsurfaces involved in $r_{ns}$. (c) and (d) indicate the curves in the factorizations $\tau_\alpha(\tau_{\beta} \tau_{\alpha})^5\tau_{\alpha'_{13}}\tau_{\alpha'_{12}}\tau_{\alpha_1}  = \tau_{\alpha'_4}\tau_{\alpha_{13}}\tau_{\alpha_{12}}$}
\label{fig_non_standard_relation}
\end{figure}
\end{example}

\section{Lefschetz fibrations and open book decompositions}\label{sec_lef_obd}
Let $X$ and $B$ be compact oriented smooth manifolds of dimension 4 and 2 respectively, possibly with boundary. A \emph{Lefschetz fibration} $f : X \rightarrow B$ is then a smooth surjective map which is a locally trivial fibration outside
of finitely many critical values $\{b_i\} \in int(B)$, where each singular fiber $f^{-1}(b_i)$
has a unique critical point, at which $f$ can be modeled in some choice of
complex coordinates by $f(z_1, z_2) = z^2_1 + z^2_2$. If $b' \in B$ is near a critical value $b_i$, then there is a simple closed curve $C$ in $f^{-1}(b')$, called a \emph{vanishing cycle}, such that the singular fiber $f^{-1}(b_i)$ can be identified with $f^{-1}(b')$ after collapsing $C$ to a point. The boundary of a regular neighborhood of a singular
fiber is a surface bundle over the circle with monodromy a right-handed
Dehn twist along the corresponding vanishing cycle. Once we fix an identification
of $\pg$ with the fiber over a base point of $B$, the topology of the Lefschetz fibration
is determined by its monodromy representation 	$\Psi : \pi_1(B - \{$critical values$\}) \rightarrow \mcg$.
If the base is $B = D^2$ the monodromy along $\partial D^2 = S^1$ is given by the product of right-handed Dehn twists
corresponding to the singular fibers, and called the \emph{total monodromy} of
the fibration. A Lefschetz fibration over $S^2$ can be decomposed
into two Lefschetz fibrations over $D^2$, one of which is trivial; consequently, a
Lefschetz fibration over $S^2$ is determined by a relator in the mapping class group.
Conversely, given a product of right-handed Dehn twists in the mapping class group,
we can construct the corresponding Lefschetz fibration over $D^2$, and if the given
product of right-handed Dehn twists is isotopic to the identity (and $g \geq 2$), then the
fibration extends uniquely over $S^2$. The monodromy representation also provides
a handlebody decomposition of a Lefschetz fibration over $D^2$: we attach 2-handles
to $\pg \times D^2$ along the vanishing cycles with framing -1 relative to the framing that
the circle inherits from the fiber. (For more detail, see e.g. \cite{gs}).

For this paper, the base $B$ will be either $S^2$ or $D^2$. When $B=D^2$, we may then specify a Lefschetz fibration by the data of the diffeomorphism type of a generic fiber $\pg = f^{-1}(b)$, and a word $\lambda = \fac$ in $\mcg$ given as a composition of positive Dehn twists. When $B=S^2$, we further require that $\lambda$ is a factorization of the identity in $\mcg$ and the genus of $\pg$ is at least 2. We denote the resulting 4-manifold by $X_{\pg,\lambda}$, which is unique up to Hurwitz equivalence (i.e. under relations of type (III) from the previous section) and global conjugation of $\lambda$.

An \emph{open book decomposition} is a pair $\obd$, where $\mon \in \mcg$. From the mapping torus $(\pg \times [0,1]) / \sim$, where $(\mon(x), 0) \sim (x, 1)$ for $x \in \pg$, we obtain a closed 3-manifold $M_{\pg}$ by gluing solid tori to the boundary so as to identify $(y, t)$ with $(y, t')$ for $y \in \partial \pg$. For a closed 3-manifold $M$, a celebrated result of Giroux \cite{gi} gives a 1-1 correspondence between open book decompositions of $M$ up to a stabilization operation and contact structures on $M$ up to isotopy.

In the case that a Lefschetz fibration over $D^2$ has fiber $\pg = \pg_{g,b}$, $b \neq 0$, the boundary $M = \partial X_{\pg,\lambda}$ has a natural open book decomposition $\obd$, where $\lambda \in Fac^+(\mon)$. Conversely, given an open book decomposition $\obd$, a positive factorization $\lambda$ into twists about homologically non-trivial curves obviously determines a Lefschetz fibration $X_{\pg,\lambda}$, which by Eliashberg \cite{el} determines a Stein structure on $X_{\pg,\lambda}$. Even more, the contact structure induced on $M$ by the Stein filling given by the Lefschetz fibration agrees with the contact structure supported by the open book decomposition $\obd$ through the Giroux correspondence (full details may be found in \cite{ao},\cite{os}).

\section{Signature and Euler characteristic of a relation}\label{sec_sig}
In \cite{en}, Endo and Nagami introduce the concept of the \emph{signature} of a relation in a mapping class group, generalizing a formula of Meyer for the signature of a surface bundle over a surface to the case of a Lefschetz fibration over $S^2$ with closed fiber. A particularly useful aspect of this approach is that it allows one to calculate the signature of a Lefschetz fibration as the sum of basic relations in the monodromy.

\begin{defin}(Endo and Nagami \cite{en})\label{def_signature}
Let $\pg = \pg_g$, $g:\F \rightarrow \mcg$ the homomorphism defined in Section \ref{sec_mcg}, and $\tau_g: \mcg \times \mcg \rightarrow \mathbb{Z}$ the signature cocycle of Meyer. Then there is an explicit homomorphism $c_g: ker(g) \rightarrow \mathbb{Z}$ inducing the evaluation map $H_2(\mcg) \rightarrow \mathbb{Z}$ for the cohomology class of $\tau_g$. For a relator $r \in ker(g)$, the \emph{signature} of $r$ is $I(r) := -c_g(r) - s(r)$, where $s(r)$ is the total exponent of the separating curves in $r$.
\end{defin}

In \cite{en}, Propositions 3.6, 3.9, 3.10, and 3.12, the signatures of the relations listed in Theorem \ref{thm_relations} are calculated. In particular, we have:

\begin{lem}\label{lem_endo_calculations}
Let $r_b$, $r_l$ and $r_c$ be the braid, lantern and 2-chain relators (III, IV and V of Theorem \ref{thm_relations}), $n(r)$ denote the total exponent of all simple closed curves in $r$, and $I$ as in Definition \ref{def_signature}. Then
\begin{itemize}

\item $I(r_b) = n(r_b) = 0$
\item $I(r_l) = 1, \ n(r_l) = -1$
\item $I(r_c) = -7, \ n(r_c) = 11$

\end{itemize}

\end{lem}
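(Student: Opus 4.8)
The statement separates into the elementary computation of the total exponents $n(r)$ and the substantive computation of the signatures $I(r)$. For the exponents I would simply read off the signed number of Dehn twists from the relator words of Theorem \ref{thm_relations}: the braid relator $\tau_{\tau_\alpha \beta} \tau_\alpha \tau^{-1}_\beta \tau^{-1}_\alpha$ contains two positive and two negative twists, giving $n(r_b) = 0$; the lantern relator has three positive and four negative twists, giving $n(r_l) = -1$; and the $2$-chain relator $(\tau_\alpha \tau_\beta)^6 \tau_\delta^{-1}$ has twelve positive and one negative twist, giving $n(r_c) = 11$. This part is a direct count requiring no geometric input.

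For the signatures, recall from Definition \ref{def_signature} that $I(r) = -c_g(r) - s(r)$, with $c_g \colon \ker(g) \to \mathbb{Z}$ the homomorphism recording the Meyer signature cocycle and $s(r)$ the total exponent of separating curves in $r$. I would treat the two terms in turn. The term $s(r)$ requires only identifying which vanishing cycles are separating once the supporting subsurface is embedded in a closed $\pg_g$: in the relevant (allowable) embedding every curve of the braid and lantern relators is nonseparating, so $s(r_b) = s(r_l) = 0$, whereas the curve $\delta$ of the $2$-chain bounds the genus-one neighborhood $\pg_{1,1}$ of $\alpha \cup \beta$ and is therefore separating, contributing its exponent $-1$, so $s(r_c) = -1$.

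It remains to evaluate $c_g$, which is where the real work lies and for which I would invoke the explicit Meyer-cocycle computations of Endo and Nagami (their Propositions 3.6, 3.9, 3.10, 3.12). For the braid relator the evaluation is conceptual: a Hurwitz move leaves the diffeomorphism type of the associated fibration unchanged, so $c_g(r_b) = 0$ and hence $I(r_b) = 0$. For the lantern relator the computation gives $c_g(r_l) = -1$, consistent with the known interpretation of lantern substitution as a rational blowdown of a $(-4)$-sphere; with $s(r_l) = 0$ this yields $I(r_l) = 1$. For the $2$-chain I would evaluate $c_g(r_c)$ by summing the cocycle along the twelvefold product $(\tau_\alpha \tau_\beta)^6$ and comparing against $\tau_\delta$, obtaining $c_g(r_c) = 8$; combined with $s(r_c) = -1$ this gives $I(r_c) = -c_g(r_c) - s(r_c) = -8 + 1 = -7$.

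The main obstacle is precisely the evaluation of $c_g$ for the $2$-chain: unlike the braid and lantern cases, which admit clean topological shortcuts, it has no one-line argument and requires carefully tracking the signature cocycle across the long product and then correctly isolating the contribution of the separating curve $\delta$. Rather than reproduce that matrix computation, I would cite the corresponding Endo--Nagami proposition and use the additivity of $c_g$ together with the separating-curve bookkeeping above to assemble the three stated values.
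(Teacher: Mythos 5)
Your proposal is correct and takes essentially the same route as the paper, which gives no independent proof: it reads off the exponent counts $n(r)$ directly from the relator words and cites Endo--Nagami's Propositions 3.6, 3.9, 3.10 and 3.12 for the values of $I$, exactly as you do. One small remark: for a general type-(III) braid relator the curves need not be nonseparating, but your conclusion $s(r_b)=0$ holds regardless, since $\alpha$ appears with cancelling exponents and $\beta$, $\tau_\alpha\beta$ are either both separating or both not, so their exponents $-1$ and $+1$ also cancel.
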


Now, following section \ref{sec_mcg}, for $r$ any relator, we write $r=\prod r_i$, where each $r_i$ is either a lantern or a 2-chain relator. Clearly, we have $n(r) = \sum n(r_i)$. Furthermore, it follows from Definition \ref{def_signature} that $I(r) = \sum I(r_i)$. Thus, for example, for $r$ the 3-chain relator as constructed in Example \ref{ex_2_chain} we have $r=r_1r_2$ where $r_1$ is a lantern, $r_2$  a 2-chain relator. Thus $I(r) = 1 - 7 = -6$, while $n(r) = -1+11 = 10$. If we consider instead the `non-standard' relator $r_{ns}$ of Example \ref{ex_nonstandard}, we see that $I$ and $n$ agree with that of the 2-chain relator. 

It is straightforward to see from the handlebody description that if a Lefschetz fibration is modified by an $r$-substitution in the associated mapping class factorization, then the change in the Euler characteristic of the 4-manifold is exactly $n(r)$; i.e. if $\lambda'$ is an $r$-substitution of $\lambda$, then $n(r) = e(X_{\pg,\lambda'}) - e(X_{\pg,\lambda})$. That the analogous statement holds for $I(r)$ in the case of a closed Lefschetz fibration over $S^2$ is the content of Theorem 4.3 of \cite{en}:

\begin{thm}\label{endo_nagami}[Endo and Nagami]
Let $X_{\pg_g, \lambda}, X_{\pg_g, \lambda'}$ be Lefschetz fibrations over $S^2$, where $\lambda'$ is an $r$-substitution of $\lambda$. Then $$\sigma(X_{\pg_g, \lambda'}) - \sigma(X_{\pg_g, \lambda}) = I(r).$$
\end{thm}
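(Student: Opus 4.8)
The plan is to reduce the statement to the additivity of the Meyer-cocycle invariant $I$ together with a localization of the signature, and only at the end to identify the signature of the elementary Lefschetz fibrations with their cocycle invariants via Meyer's formula.

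First I would record that both $\lambda$ and $\lambda'$ are factorizations of the identity in $\mcg$, hence relators in $\ker(g)$, and that — as observed in Section \ref{sec_mcg} — an $r$-substitution may be written as a concatenation $\lambda' = \lambda\, r$ with $r = \prod r_i \in \ker(g)$, each $r_i$ a relator of type (III), (IV) or (V). A relator of type (III) is a Hurwitz move, producing a Lefschetz fibration of the same total space, and satisfies $I(r_b)=0$, so these contribute nothing to either side. Performing the substitution one relator at a time, each intermediate word $\lambda^{(j)} = \lambda\, r_1 \cdots r_j$ still maps to the identity (since each $r_i \in \ker(g)$), hence defines a genuine closed fibration $X_{\pg_g,\lambda^{(j)}}$ over $S^2$; the single-step signature differences therefore telescope, and since $I(r) = \sum I(r_i)$ it suffices to prove the formula when $r$ is a single lantern (IV) or $2$-chain (V) relator.

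Second, for such a single-relator substitution $\lambda = \lambda_3\lambda_1\lambda_4 \rightsquigarrow \lambda_3\lambda_2\lambda_4 = \lambda'$ with $r = \lambda_1^{-1}\lambda_2$, I would localize the signature change. Writing $S^2 = D_{\mathrm{in}} \cup D_{\mathrm{out}}$, place over $D_{\mathrm{in}}$ the singular fibers recorded by $\lambda_1$ (resp.\ $\lambda_2$) and over $D_{\mathrm{out}}$ those recorded by the unchanged subwords. Because $r \in \ker(g)$ we have $g(\lambda_1) = g(\lambda_2)$ in $\mcg$, so the boundary $\pg_g$-bundle over $\partial D_{\mathrm{in}}$ and the outer piece $W$ are identical for $X := X_{\pg_g,\lambda}$ and $X' := X_{\pg_g,\lambda'}$. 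By Novikov additivity the common contribution of $W$ cancels, giving $\sigma(X') - \sigma(X) = \sigma(V_2) - \sigma(V_1)$ where $V_i$ is the inner piece carrying $\lambda_i$. Regluing $V_2$ to the orientation-reversed copy $-V_1$ along their common fibered boundary produces the closed (a priori achiral) Lefschetz fibration over $S^2$ with relator $r$, whose signature is $\sigma(V_2)-\sigma(V_1)$ by a second application of Novikov additivity; hence $\sigma(X') - \sigma(X) = \sigma(X_{\pg_g, r})$. (For $g$ large enough to embed the supporting surface of the basic relator this $X_{\pg_g,r}$ genuinely exists, and smaller genus is irrelevant since the configurations in question already live on $\pg_g$.)

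It then remains to check $\sigma(X_{\pg_g, r}) = I(r)$ for $r$ a lantern or $2$-chain relator. Here I would invoke Meyer's signature formula: the signature of a Lefschetz fibration over $S^2$ is the telescoping sum of the signature cocycle $\tau_g$ over the partial products of the monodromy factorization, which by the very construction of $c_g$ (the lift of the $\tau_g$-central extension of $\mcg$ across the free group, restricted to $\ker(g)$) equals $-c_g(r)$, corrected by the local contribution $-s(r)$ of the singular fibers whose vanishing cycles are separating. This is precisely $I(r)$, and for the lantern and $2$-chain it recovers the values $I(r_l)=1$ and $I(r_c)=-7$ of Lemma \ref{lem_endo_calculations}; combined with the reduction above this completes the proof. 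The main obstacle is exactly this last identification — passing from the topological signature to the algebraically defined invariant $c_g$ for fibrations with genuine Lefschetz singularities. This is the substance of Meyer's theorem and its extension past smooth surface bundles: one must analyze neighborhoods of the singular fibers, apply Wall's non-additivity at the gluings, and correctly normalize the local signature contributions of separating versus non-separating vanishing cycles, which is what produces the $s(r)$ term and fixes the sign in the definition of $I$.
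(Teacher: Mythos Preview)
The paper does not prove this theorem: it is quoted verbatim as Theorem~4.3 of Endo--Nagami \cite{en} and used as a black box, so there is no ``paper's own proof'' to compare against. What you have sketched is, in outline, the Endo--Nagami argument itself: reduce to a single basic relator by additivity of $I$ and telescoping of signatures, localize the signature change over a disc via Novikov, and then identify the signature of the (achiral) piece carrying the relator with $-c_g(r)-s(r)$ through Meyer's cocycle formula plus the local contributions of separating vanishing cycles.

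Two remarks on your sketch. First, your intermediate words $\lambda^{(j)} = \lambda\,r_1\cdots r_j$ are not positive in general (each $r_i$ has both positive and negative letters), so the objects $X_{\pg_g,\lambda^{(j)}}$ are \emph{achiral} Lefschetz fibrations; this is harmless --- Endo--Nagami work in exactly that generality --- but it is outside the paper's convention for the symbol $X_{\pg,\lambda}$, and you should say so explicitly. Second, the step ``reglue $V_2$ to $-V_1$ to get $X_{\pg_g,r}$'' hides a small orientation bookkeeping exercise (reversing the $4$-manifold orientation versus reversing the base versus the fiber) that you should spell out to be sure the resulting achiral fibration really has monodromy word $r$ and that Novikov applies with the signs you claim. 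Your final paragraph correctly isolates where the genuine work lies: extending Meyer's formula across Lefschetz singular fibers and pinning down the $s(r)$ correction. That is precisely the content of \cite{en}, and the present paper simply imports it.
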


For the purposes of this paper, we require a version of Theorem \ref{endo_nagami} which covers the case of Lefschetz fibrations with open book decomposition boundary. We have:

\begin{thm}\label{thm_lf}
Let $\pg = \pg_{g,b}$ be a surface with boundary, and $X_{\pg, \lambda}, X_{\pg, \lambda'}$ Lefschetz fibrations over $D^2$, where $\lambda'$ is an $r$-substitution of $\lambda$. Then \begin{equation}\label{eq_1} \sigma(X_{\pg, \lambda'}) - \sigma(X_{\pg, \lambda}) = I(r). \end{equation}
\end{thm}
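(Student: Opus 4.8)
The plan is to reduce the statement to the closed-fiber, closed-base case already established in Theorem \ref{endo_nagami}. Since $\lambda'$ is an $r$-substitution of $\lambda$, the two words represent the same mapping class $\mon \in \mcg$, so the fibrations $X_{\pg,\lambda}$ and $X_{\pg,\lambda'}$ share the same boundary open book $\obd$, and in particular the same boundary 3-manifold $M := \partial X_{\pg,\lambda} = \partial X_{\pg,\lambda'}$. The idea is to cap off both fibrations, in the fiber direction and in the base direction, by gluing on one and the same 4-manifold $Y$, producing closed (possibly achiral) Lefschetz fibrations over $S^2$ to which Theorem \ref{endo_nagami} applies; Novikov additivity will then let the contribution of $Y$ cancel from the difference of signatures.

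First I would cap off the fiber. Embed $\pg = \pg_{g,b}$ into a closed surface $\Pg$ of genus at least $2$ by gluing on a single connected surface $R$ with $\partial R = \partial\pg$ and of sufficiently high genus, chosen with no disk components so that the inclusion $\pg \hookrightarrow \Pg$ is injective on isotopy classes of essential curves. This induces a homomorphism $\mcg \to \Gamma_\Pg$ carrying $\tau_\alpha \mapsto \tau_\alpha$, under which $\lambda, \lambda'$ become positive words $\hat\lambda, \hat\lambda'$ with common image $\Mon$, and $r$ becomes a relator $\widetilde r$ with $\hat\lambda'$ an $\widetilde r$-substitution of $\hat\lambda$. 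Writing $r = \prod r_i$ with each $r_i$ a lantern or 2-chain relator as in Section \ref{sec_mcg}, each supporting subsurface still embeds in $\Pg$, so $\widetilde r = \prod \widetilde r_i$ with the same local type; by Lemma \ref{lem_endo_calculations} and the additivity of $I$ this gives $I(\widetilde r) = \sum I(r_i) = I(r)$. Geometrically, capping the fiber amounts to gluing $R \times D^2$ to $X_{\pg,\lambda}$ along its horizontal boundary $\partial\pg \times D^2$, yielding the closed-fiber Lefschetz fibration $X_{\Pg,\hat\lambda} \to D^2$ with monodromy $\Mon$. Next I would cap off the base: choose a factorization $\mu$ of $\Mon^{-1}$ into Dehn twists (allowing negative twists) and glue the corresponding fibration over the complementary disk along the $\Pg$-bundle over the equator, obtaining a closed fibration $W \to S^2$ with fiber $\Pg$ and total monodromy $\Mon\cdot\Mon^{-1}=1$. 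Performing the identical construction on $X_{\pg,\lambda'}$ produces $W'$. Crucially, since $\Mon$, $\mu$, and $R$ are the same in both cases, the glued-on piece $Y := (R\times D^2)\cup(\text{the }\mu\text{-fibration})$ is one and the same 4-manifold, attached by the same map; thus $W = X_{\pg,\lambda}\cup_M Y$ and $W' = X_{\pg,\lambda'}\cup_M Y$, with $\hat\lambda'\cdot\mu$ an $\widetilde r$-substitution of $\hat\lambda\cdot\mu$ since $\mu$ is untouched.

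The conclusion is then immediate. Theorem \ref{endo_nagami} gives $\sigma(W') - \sigma(W) = I(\widetilde r) = I(r)$, while Novikov additivity across the closed separating 3-manifold $M$ gives $\sigma(W) = \sigma(X_{\pg,\lambda}) + \sigma(Y)$ and $\sigma(W') = \sigma(X_{\pg,\lambda'}) + \sigma(Y)$; subtracting, the $\sigma(Y)$ terms cancel and $\sigma(X_{\pg,\lambda'}) - \sigma(X_{\pg,\lambda}) = I(r)$.

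The main point requiring care is the signature bookkeeping of the gluing. Because $W$ and $W'$ are each obtained by gluing $Y$ along the \emph{entire} closed boundary $M$, ordinary Novikov additivity is exact and no Wall non-additivity correction term appears; this is precisely what makes the contribution of the otherwise complicated piece $Y$ drop out of the difference, and I expect verifying that this gluing is sufficiently standard to avoid a correction to be the delicate step. Secondary points to check are that Theorem \ref{endo_nagami} remains valid in the possibly achiral setting produced by $\mu$ (the cocycle $c_g$ of \cite{en}, and hence the signature formula, is insensitive to the signs of the twists), that the genus is at least $2$ (arranged by the choice of $R$), and that no curve appearing in $r$ becomes inessential under capping (arranged by taking $R$ connected of positive genus with no disk caps, so that $\pg\hookrightarrow\Pg$ is $\pi_1$-injective).
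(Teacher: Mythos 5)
Your overall strategy---embed both fibrations into closed fibrations over $S^2$, apply Theorem~\ref{endo_nagami}, and cancel the added piece---is the same as the paper's, but your bookkeeping is genuinely different and in one respect cleaner. The paper caps the fiber with $\pg'\cong\pg_{1,b+1}$, uses the boundary-twist trick to produce a \emph{positive} word $\lambda_1$ such that $\lambda_1\circ\lambda$ is a factorization of the identity on the capped surface $\widehat\pg$, and then recovers $X_{\pg,\lambda}$ in stages: removing the trivial piece over the second hemisphere, a disc in the fiber, the fiber summand $X_{\pg'',\lambda_1}$, and finally the cap $\pg'$. Since those gluings are along pieces of the boundary which themselves have boundary, each stage needs Lemma~\ref{lem_wall} (Wall non-additivity, with identical and hence cancelling correction terms). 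You instead lump everything added into a single 4-manifold $Y$ glued to $X_{\pg,\lambda}$ along its \emph{entire closed} boundary $M$, so one application of Novikov additivity suffices and no Wall correction arises. Your decomposition $W=X_{\pg,\lambda}\cup_M Y$ is legitimate, and $Y$ is indeed independent of the factorization, since the boundary open book (and hence the gluing map) depends on $\lambda$ only through the mapping class $\mon$; your verification that $I$ is unchanged by the capping, via the decomposition $r=\prod r_i$ into lanterns and 2-chains with braid-type relators contributing $0$, matches the paper's convention $I(r)=\sum I(r_i)$.

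The one genuine gap is your cap in the base direction: choosing $\mu$ a factorization of $\Mon^{-1}$ ``allowing negative twists'' produces an \emph{achiral} fibration $W$, and Theorem~\ref{endo_nagami} is stated (and proved in \cite{en}) for Lefschetz fibrations over $S^2$, i.e.\ for \emph{positive} factorizations of the identity. Your parenthetical claim that the Endo--Nagami machinery is insensitive to the signs of the twists is exactly what would require proof, and you do not supply it. Fortunately the repair costs nothing: on the closed surface $\Pg$ the monoid generated by positive Dehn twists is the whole mapping class group (given any positive relator, e.g.\ a chain relator capped off in $\Pg$, one can solve for the inverse of any of its twists as a positive word, and conjugation handles arbitrary curves), so $\Mon^{-1}$ admits a positive factorization and you may simply take $\mu$ positive. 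This is precisely the role played in the paper's proof by the boundary-twist trick (writing $\tau_\delta^{n}\circ\lambda^{-1}$ as a positive word before capping the last boundary component). With $\mu$ positive, $\hat\lambda\mu$ and $\hat\lambda'\mu$ are honest positive factorizations of the identity differing by an $r$-substitution, and your argument goes through as written, yielding a proof that is correct and arguably simpler than the paper's staged removal.
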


 Our approach is to embed $X_{\pg, \lambda}$ and $X_{\pg, \lambda'}$ into Lefschetz fibrations which satisfy the hypotheses of Theorem \ref{endo_nagami}, and then, using Novikov additivity and Wall's formula for non-additivity of the signature, show that the signature equality \eqref{eq_1} holds at each step as we remove what is necessary to recover our original fibrations. We require the following application of Wall non-additivity:

\begin{lem}\label{lem_wall}
Suppose $X_i$, $X'_i$ are compact 4-manifolds such that $\partial X_i = \partial X'_i$, $i=1,2$. Let $X = X_1 \cup_f X_2$, $X' = X'_1 \cup_f X'_2$ be the result of gluing along a common submanifold $N$ of the boundaries via an orientation reversing diffeomorphism $f$. Then

$$\sigma(X) - \sigma(X_1) - \sigma(X_2) = \sigma(X') - \sigma(X'_1) - \sigma(X'_2).$$
\end{lem}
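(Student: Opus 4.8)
The plan is to apply Wall's non-additivity theorem to both decompositions and then observe that the resulting correction terms are identical. Recall that when a compact oriented $4$-manifold is obtained by gluing two pieces along a codimension-zero submanifold $N$ of their boundaries, the signature fails to be additive, and Wall's theorem expresses the defect as a Maslov-type correction term. Writing $V = H_1(\partial N; \mathbb{R})$ equipped with its intersection pairing (a symplectic form, since $\partial N$ is a closed oriented surface), Wall's formula reads
\begin{equation*}
\sigma(X_1 \cup_f X_2) - \sigma(X_1) - \sigma(X_2) = -\mu(L_0, L_1, L_2),
\end{equation*}
where $\mu$ is the Maslov triple index of three Lagrangian subspaces $L_0, L_1, L_2 \subset V$.

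The decisive point is the precise identification of these Lagrangians. The surface $\partial N$ separates $\partial X_1$ into $N$ and its complement $P_1 = \overline{\partial X_1 \setminus N}$, and likewise separates $\partial X_2$ into $N$ and $P_2 = \overline{\partial X_2 \setminus N}$. The three Lagrangian subspaces are then the kernels of the maps on first homology induced by the inclusions of $\partial N$ into $N$, into $P_1$, and into $P_2$ respectively; each is Lagrangian in $V$ by the half-lives-half-dies principle. In particular, every $L_i$ is determined entirely by the boundary data, namely by $N$, by $\partial X_1$, by $\partial X_2$, and by the gluing diffeomorphism $f$, with no reference to the interiors of the four-manifolds.

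This is where the hypothesis does all the work. Since $\partial X_1 = \partial X'_1$ and $\partial X_2 = \partial X'_2$, and since the gluing is performed along the same $N$ via the same $f$ in both cases, the complementary pieces $P_1, P_2$ and all three inclusion-induced maps are identical for the primed and unprimed decompositions. Consequently the triple $(L_0, L_1, L_2)$, and hence the correction term $\mu(L_0, L_1, L_2)$, is the same whether computed for $X = X_1 \cup_f X_2$ or for $X' = X'_1 \cup_f X'_2$. Subtracting the two instances of Wall's formula, the correction terms cancel and we obtain
\begin{equation*}
\sigma(X) - \sigma(X_1) - \sigma(X_2) = -\mu(L_0,L_1,L_2) = \sigma(X') - \sigma(X'_1) - \sigma(X'_2),
\end{equation*}
which is the assertion.

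The only step demanding genuine care is confirming that the correction term is truly insensitive to the interiors of the four-manifolds, depending on each $X_i$ solely through $\partial X_i$ and the gluing locus. This is exactly the content of Wall's theorem: the Maslov index is computed inside $H_1(\partial N; \mathbb{R})$ from Lagrangians arising as kernels of maps into the homology of the boundary pieces $N$, $P_1$, $P_2$, none of which involves the bounding four-manifolds. Once the formula is set up with these Lagrangians correctly identified, the equality of correction terms — and thus the lemma — follows immediately from $\partial X_i = \partial X'_i$.
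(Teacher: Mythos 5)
Your proof is correct and follows essentially the same route as the paper: both invoke Wall's non-additivity theorem and observe that the correction term is computed entirely from the inclusions of $\partial N$ into $N$ and the complementary boundary pieces $\partial X_i - N$, so it coincides for the primed and unprimed decompositions and cancels upon subtraction. Your version merely spells out the Lagrangian/Maslov-index formulation of Wall's correction term explicitly, where the paper cites it as $\sigma(V;A,B,C)$ and states its boundary-dependence without unpacking the definition.
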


\begin{proof}
By Wall's formula for additivity of the signature in this situation \cite{wa}, we have
$$\sigma(X) - \sigma(X_1) - \sigma(X_2) = -\sigma(V;A,B,C)$$
where the `correction term' $\sigma(V;A,B,C)$ depends only on the inclusions of $\partial N$ in $\partial X_1 - N, N$ and $\partial X_2 - N$. In particular, the calculation for $\sigma(X') - \sigma(X'_1) - \sigma(X'_2)$ gives the same correction term, from which the result follows.
\end{proof}

\begin{proof}[Proof of Theorem \ref{thm_lf}]

To set things up, let $\pg' \cong \pg_{1,b+1}$,  $\pg'':= \pg \cup_{\partial(\pg)} \pg'$, and $\widehat{\pg}$ the closed surface obtained by filling in the remaining boundary component (Figure \ref{fig_surfaces}). It is well-known that any mapping class on $\pg_{g,1}$ has a factorization such that all negative twists are about the boundary component $\delta$, so in particular there is $n$ such that $\tau_\delta^n \circ \lambda^{-1}$ has positive factorization $\lambda_1$. The word $\lambda_1 \circ \lambda$ is thus a factorization of the identity element in $\Gamma_{\widehat{\pg}}$, so $X_{\widehat{\pg},\lambda_1 \circ \lambda}$ gives a Lefschetz fibration over $S^2$ (a similar construction was used in \cite{ao2}, where one may also find a proof of the above `well-known' fact). Note that our choice of $\pg'$ ensures that, unless $\pg$ is a disc (in which case the mapping class group is trivial), the genus of $\widehat{\pg}$ is at least 2, so that $X_{\widehat{\pg},\lambda_1 \circ \lambda}$ is well-defined. Now, $\lambda = \lambda'$ as elements of $\mcg$, so the above goes through identically for $\lambda'$. By Theorem \ref{endo_nagami} we have
$$\sigma(X_{\widehat{\pg},\lambda_1 \circ \lambda'}) - \sigma(X_{\widehat{\pg},\lambda_1 \circ \lambda}) = I(r).$$

\begin{figure}[h]
\centering \scalebox{.7}{\includegraphics{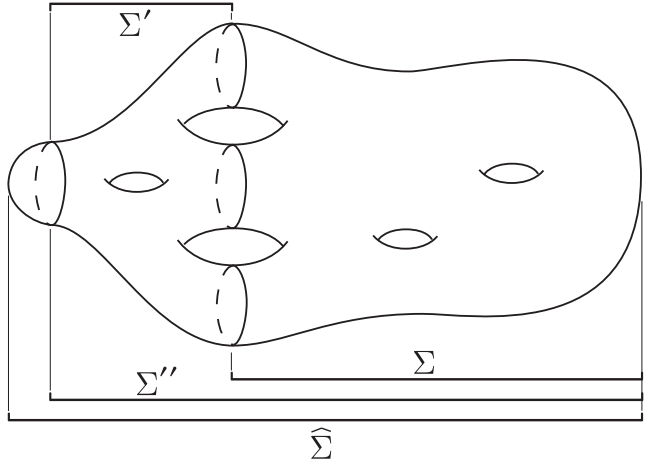}}
\caption[relations]{Surfaces involved in proof of Theorem \ref{thm_lf}} \label{fig_surfaces}
\end{figure}

It remains to check that the equality \eqref{eq_1} holds as we remove these new bits to recover the signatures $\sigma(X_{\pg_g, \lambda})$ and $\sigma(X_{\pg_g, \lambda'})$ for the Lefschetz fibrations over $D^2$. The Lefschetz fibration $X_{(\widehat{\pg},\lambda_1 \circ \lambda)}$ over $D^2$ is obtained from the (closed) fibration over $S^2$ by deleting a piece with zero signature, so by Novikov additivity the equality \eqref{eq_1} holds for these. Similarly, using Lemma \ref{lem_wall}, removing a disc from the fiber (corresponding to a 4-ball in the corresponding 4 manifold) to recover the Lefschetz fibrations with fiber $\pg''$ gives

$$\sigma(X_{\widehat{\pg},\lambda_1 \circ \lambda'}) - \sigma(X_{\pg'',\lambda_1 \circ \lambda'})  =  \sigma(X_{\widehat{\pg},\lambda_1 \circ \lambda}) - \sigma(X_{\pg'',\lambda_1 \circ \lambda})$$

and so

$$\sigma(X_{\pg'',\lambda_1 \circ \lambda'}) - \sigma(X_{\pg'',\lambda_1 \circ \lambda}) = \sigma(X_{\widehat{\pg},\lambda_1 \circ \lambda'})- \sigma(X_{\widehat{\pg},\lambda_1 \circ \lambda}) =I(r).$$

Then $X_{\pg'',\lambda_1 \circ \lambda}$ is a fiber sum of $X_{\pg'',\lambda_1}$ and $X_{\pg'',\lambda}$, so

$$\sigma(X_{\pg'',\lambda'}) - \sigma(X_{\pg'',\lambda}) = I(r).$$

Finally, another application of Lemma \ref{lem_wall} gives the equality for $\pg$, as desired.

\end{proof}

\section{Applications}\label{sec_applications}

Let $\obd$ be a positive open book decomposition supporting $(M,\xi)$. Now, while any $\lambda \in Fac^+(\mon)$ determines a Stein filling $X_{\pg,\lambda}$ of $(M,\xi)$, it is not the case that \emph{any} Stein filling $X$ can be given as $X_{\pg,\lambda}$ for some $\lambda \in Fac^+(\mon)$ (see Section 5 of \cite{wand} for an explicit counterexample). It does however follow easily from Giroux's theorem that there is some $(\pg',\mon')$ obtainable by some number of positive stabilizations on $\obd$ such that this holds; i.e for any Stein filling $X$ of $(M,\xi)$, there is $\lambda \in Fac^+(\mon')$ such that $X = X_{\pg,\lambda}$. Note then that by Theorem \ref{thm_lf}, and the calculations of Lemma \ref{lem_endo_calculations}, we find that, if $X'$ is any other Stein filling of $(M,\xi)$, then $e(X) + \sigma(X) \equiv e(X') + \sigma(X') (\textrm{mod} 4)$.

\begin{cor}\label{cor_fillings_general}
If $X,X'$ are Stein fillings of contact $(M,\xi)$, then $e(X) + \sigma(X) \equiv e(X') + \sigma(X')  (\textrm{mod} 4 )$.
\end{cor}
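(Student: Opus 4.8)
The plan is to reduce the comparison of two arbitrary Stein fillings to the comparison of two positive factorizations of a single open book, and then to read off the change in $e+\sigma$ from the basic relators that relate those factorizations. The arithmetic driving the whole argument is that, combining Lemma~\ref{lem_endo_calculations} with the Euler characteristic count, every basic relator $r_i$ satisfies $I(r_i)+n(r_i)\in\{0,4\}$ (braid: $0+0$; lantern: $1+(-1)=0$; $2$-chain: $-7+11=4$). Thus the quantity $I+n$ is insensitive, modulo $4$, to any relator substitution, and the corollary will follow once $X$ and $X'$ are realized by factorizations differing by such substitutions.

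First I would set up the reduction. Given Stein fillings $X,X'$ of $(M,\xi)$, the discussion preceding the corollary supplies open books, obtained from $\obd$ by positive stabilization, together with positive factorizations realizing $X$ and $X'$ as the associated Lefschetz fibrations over $D^2$. Since both of these open books support $(M,\xi)$, Giroux's theorem furnishes a common positive stabilization $(\pg,\mon)$. Positive stabilization adds a canceling $1$-/$2$-handle pair to the Lefschetz fibration over $D^2$ and hence leaves the filling $4$-manifold unchanged, so after stabilizing I obtain $\lambda,\lambda'\in Fac^+(\mon)$ on a common fiber $\pg$ with $X_{\pg,\lambda}=X$ and $X_{\pg,\lambda'}=X'$.

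Next I would relate the two factorizations. Since $\lambda$ and $\lambda'$ are positive words representing the same class $\mon\in\mcg$, the word $\lambda^{-1}\lambda'$ lies in $\ker(g)$, i.e. is a relator $r$. By the presentation of Theorem~\ref{thm_relations}, $\ker(g)$ is the normal closure of the relators (I)--(V), so (working modulo types (I)--(III), as in Section~\ref{sec_mcg}) I may exhibit $\lambda'$ as an $r$-substitution of $\lambda$ with $r=\prod_i r_i$, each $r_i$ a lantern or $2$-chain relator. Theorem~\ref{thm_lf} then gives $\sigma(X')-\sigma(X)=I(r)$, while the handlebody count recalled in Section~\ref{sec_sig} gives $e(X')-e(X)=n(r)$. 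Using the additivity $I(r)=\sum_i I(r_i)$ and $n(r)=\sum_i n(r_i)$, I conclude
\[
\big(e(X')+\sigma(X')\big)-\big(e(X)+\sigma(X)\big)=\sum_i\big(I(r_i)+n(r_i)\big)\equiv 0\pmod{4},
\]
since each summand is $0$ or $4$ by the arithmetic above.

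The hard part is the reduction step rather than the computation: one must know that the two fillings can be brought onto a common fiber as positive factorizations of the \emph{same} mapping class, and that the relator connecting them decomposes into the standard lantern, $2$-chain, and braid relators. This rests on Giroux's common-stabilization theorem, on completeness of the Gervais--Luo presentation (so that every relation is a product of conjugates of (I)--(V)), and on the fact that positive stabilization does not alter the underlying filling. With those inputs in hand, the signature and Euler characteristic bookkeeping is purely formal.
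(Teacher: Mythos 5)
Your proposal is correct and takes essentially the same route as the paper: the paper's (terse) argument in Section \ref{sec_applications} likewise uses Giroux's theorem to realize both fillings as Lefschetz fibrations over positive factorizations of a common stabilized monodromy, and then applies Theorem \ref{thm_lf} together with the handle count for $e$ and the values in Lemma \ref{lem_endo_calculations} (braid and lantern give $I+n=0$, the $2$-chain gives $I+n=4$) to conclude invariance of $e+\sigma$ modulo $4$. You merely make explicit the details the paper compresses into ``follows easily from Giroux's theorem,'' namely the common positive stabilization and the fact that stabilization adds a canceling $1$-/$2$-handle pair and so leaves the filling unchanged.
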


\begin{remark} It is worth noting that this corollary is included here mainly for motivational purposes; on its own it can be derived easily by observing that, if $Y$ is a (strong) concave symplectic filling of $(M,\xi)$, then $X \cup_M Y$ and $X' \cup_M Y$ admit symplectic, in particular almost-complex, structures, so that the sum $e + \sigma$ of each is $0(\textrm{mod}4)$ (see e.g. \cite{gs}). The conclusion follows easily.
\end{remark}

If, however, $(M,\xi)$ is supported by \emph{planar} $\obd$, then the situation is somewhat more restrictive, due to a recent result of Wendl which in effect says that we do not have to stabilize $\obd$. In particular, in \cite{we}, Wendl has shown that if planar $\obd$ supports $(M,\xi)$, and $(X,\omega)$ is any minimal strong symplectic filling of $(M,\xi)$, then (after possibly enlarging $X$ by a trivial symplectic cobordism preserving the contact boundary), $(X,\omega)$ admits a Lefschetz fibration whose boundary is $\obd$. It follows that $(X,\omega)$ is symplectically deformation equivalent to Stein $X_{\pg,\lambda}$ for $\lambda \in Fac^+(\mon)$.

To summarize using our notation,

\begin{thm}[Wendl]\label{wendl}
Suppose that $X$ is a minimal strong symplectic filling of $(M,\xi)$, and that the latter admits a planar open book decomposition $\obd$. Then there is $\lambda \in Fac^+(\mon)$ such that $X = X_{\pg,\lambda}$.
\end{thm}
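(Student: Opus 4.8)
The plan is to realize the filling as the total space of a Lefschetz fibration by foliating it with punctured pseudoholomorphic curves whose generic leaves are symplectically isotopic copies of the page $\pg$. I would first complete $(X,\omega)$ to a non-compact manifold $\widehat{X}$ by attaching a cylindrical symplectization end $([0,\infty) \times M, d(e^t \alpha))$, where $\alpha$ is a contact form adapted to $\obd$ in the sense of Giroux, so that the binding is a collection of closed Reeb orbits and the pages are transverse to the Reeb flow. Choosing an almost complex structure $J$ on $\widehat{X}$ that is compatible with $\omega$ and cylindrical on the end, the pages lift to finite-energy punctured $J$-holomorphic curves asymptotic to the binding orbits; because $\obd$ is \emph{planar}, these curves have genus $0$.

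The heart of the argument is the analysis of the moduli space $\mathcal{M}$ of such genus-$0$ curves. I would first compute the Fredholm index and verify that the expected dimension is $2$, consistent with sweeping out a $4$-manifold by surfaces. The decisive input is automatic transversality for punctured holomorphic curves in dimension four: in the genus-$0$ setting the relevant inequality relating the Fredholm index to the normal Chern number is satisfied, so every somewhere-injective curve in $\mathcal{M}$ is regular, with no need to perturb $J$ away from the adapted structure. Positivity of intersections together with the adjunction formula for finite-energy curves then forces distinct curves to be disjoint and each to be embedded.

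Next I would control the ends of $\mathcal{M}$ via SFT/Gromov compactness for punctured curves. The limiting configurations are nodal buildings consisting of genus-$0$ components meeting at a single node. Here \emph{minimality} of $(X,\omega)$ is essential: it rules out bubbling of exceptional spheres, which would otherwise destroy the foliation, so that the only surviving degenerations are the nodal fibers that become the Lefschetz singularities. Combining transversality, compactness, and the intersection-theoretic input, one argues that through every point of $\widehat{X}$ there passes a unique curve of $\mathcal{M}$ (or a unique nodal configuration), so $\mathcal{M}$ foliates $\widehat{X}$ and defines a map to a $2$-dimensional base with only Lefschetz-type critical points.

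Finally, restricting this fibration to $X$ and identifying its boundary open book with $\obd$, I would read off the vanishing cycles of the singular fibers; positivity of the holomorphic nodes makes every vanishing-cycle Dehn twist right-handed, yielding a factorization $\lambda \in Fac^+(\mon)$ with $X = X_{\pg,\lambda}$. The main obstacle is the moduli-space analysis of the two preceding paragraphs, namely simultaneously achieving transversality without perturbing away the geometric structure, ruling out all non-nodal degenerations, and upgrading the resulting local foliation to a global one across the entire filling. Planarity (genus $0$) and minimality are precisely the hypotheses that make each of these steps go through, and I expect the compactness-and-uniqueness step that promotes the foliation over all of $\widehat{X}$ to be the most delicate.
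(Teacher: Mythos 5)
This theorem is not proved in the paper at all---it is quoted, in the paper's notation, from Wendl \cite{we}---and your outline is essentially a faithful reconstruction of Wendl's actual argument there: a holomorphic open book foliation in the cylindrical end, automatic transversality for the genus-zero (planar) curves in dimension four, SFT compactness with minimality excluding exceptional spheres, and the resulting Lefschetz fibration with positive nodes yielding the factorization $\lambda \in Fac^+(\mon)$. The only caveats are minor fidelity points rather than gaps: Wendl's precise statement allows first enlarging $X$ by a trivial symplectic cobordism and produces $X_{\pg,\lambda}$ up to symplectic deformation equivalence (as the paper notes just before the statement), the existence of the holomorphic pages is not automatic but requires Wendl's special adapted construction of $J$, and the compactness analysis must also exclude multi-level breaking along Reeb orbits in the symplectization levels, not only nodal degenerations---all of which is carried out in \cite{we} along the lines you anticipate.
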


To see that this gives a restriction, we have the following direct corollary of Theorem \ref{thm_relations}:

\begin{cor}\label{cor_planar_decomposition}
Let $\pg = \pg_{0,b}$ be a planar surface, $\mon \in \mcg$, and $\lambda_1, \lambda_2 \in Fac^+{\mon}$. Then $\lambda_1, \lambda_2$ are related by an $r$-substitution where $r=\prod r_i$ is a concatenation of lantern relations.
\end{cor}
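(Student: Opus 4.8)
The plan is to combine the presentation of $\mcg$ from Theorem~\ref{thm_relations} with the observation that a planar surface admits no pair of curves meeting in a single point. First I would recall, as explained in the discussion following Theorem~\ref{thm_relations}, that since $\lambda_1$ and $\lambda_2$ are positive words representing the same element $\mon$, they are related by a sequence of the relators (I)--(V); working (as throughout) up to relations of type (I), (II) and (III), the net change is realized by an $r$-substitution with $r = \prod r_i$, where each $r_i$ is a relator of type (IV) (a lantern) or (V) (a 2-chain). Thus the corollary reduces to showing that no 2-chain relator can occur when $\pg = \pg_{0,b}$.

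For that, the key step is a parity argument. Each $r_i$ is a relator in $\mcg$, so its constituent curves must be realized as honest simple closed curves on $\pg$. The 2-chain relator (V) involves curves $\alpha,\beta$ with $\#|\alpha \cap \beta| = 1$ (Figure~\ref{fig_relations}(b)); equivalently, a regular neighborhood of $\alpha \cup \beta$ is a one-holed torus. But on the planar surface $\pg_{0,b}$ every simple closed curve is separating (capping the boundary components yields $S^2$, in which any such curve bounds), so the algebraic intersection form vanishes and any two simple closed curves have even geometric intersection number. In particular no two curves can meet in exactly one point, so $\pg_{0,b}$ supports no configuration giving rise to a 2-chain relation. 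Hence every $r_i$ is a lantern and $r = \prod r_i$ is a concatenation of lantern relations, as claimed.

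The main obstacle lies in the first paragraph rather than the second: one must be careful that the passage from the group presentation to a statement about \emph{positive} factorizations can be arranged so that the difference between $\lambda_1$ and $\lambda_2$ is captured by a single $r$-substitution whose factors $r_i$ are all of type (IV) or (V), without auxiliary relators of type (I)--(III) contributing to the change in the twist count. This is precisely the content of the normal-form discussion in Section~\ref{sec_mcg}, which lets us absorb the type (I)--(III) moves (nullhomotopic twists, commutations, and Hurwitz moves) into the equivalence under which we work, leaving only the lantern and 2-chain relators as the substitutions that genuinely alter a positive factorization. Once this reduction is in hand, the parity argument above finishes the proof, and in fact establishes the slightly stronger fact that 2-chain relators are altogether absent from $\Gamma_{\pg_{0,b}}$.
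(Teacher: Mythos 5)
Your proposal is correct and matches the paper's intended argument: the paper offers no written proof, calling the statement a ``direct corollary'' of Theorem \ref{thm_relations}, and the implicit reasoning is exactly yours --- modulo the standing conventions absorbing relators of type (I)--(III), any two positive factorizations differ by a concatenation of lantern and 2-chain relators, and the 2-chain is excluded on $\pg_{0,b}$ because its curves $\alpha,\beta$ meet in one point, forcing an embedded one-holed torus, which a genus-zero surface cannot contain. Your parity argument (every simple closed curve on a planar surface is separating, so geometric intersection numbers are even) is a perfectly sound way to make that exclusion precise.
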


Now, it follows that, for each $i$, either $I(r_i) = 1$ and $n(r_i) = -1$, or $I(r_i) = -1$ and $n(r_i) = 1$. In particular, we have:

\begin{lem}\label{lem_obstruction}
Let $\pg = \pg_{0,b}$ be a planar surface, $\mon \in \mcg$, and $\lambda_1, \lambda_2 \in Fac^+(\mon)$. Then $e(X_{\pg,\lambda_1}) + \sigma(X_{\pg,\lambda_1}) = e(X_{\pg,\lambda_2}) + \sigma(X_{\pg,\lambda_2})$.
\end{lem}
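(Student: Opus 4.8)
The plan is to combine the three facts already assembled in the paper: the signature formula of Theorem \ref{thm_lf}, the Euler characteristic formula (stated just before that theorem), and the structural result of Corollary \ref{cor_planar_decomposition} that, on a planar surface, any two positive factorizations of $\mon$ are related by a concatenation of lantern relators. Concretely, since $\lambda_1, \lambda_2 \in Fac^+(\mon)$ and $\pg = \pg_{0,b}$ is planar, Corollary \ref{cor_planar_decomposition} gives us a relator $r = \prod r_i$, each $r_i$ a lantern, such that $\lambda_2$ is an $r$-substitution of $\lambda_1$. I would then track the quantity $e + \sigma$ across this substitution.

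First I would record the change in each summand separately. By the Euler characteristic observation preceding Theorem \ref{thm_lf}, the $r$-substitution changes the Euler characteristic by exactly $n(r) = e(X_{\pg,\lambda_2}) - e(X_{\pg,\lambda_1})$. By Theorem \ref{thm_lf}, it changes the signature by exactly $I(r) = \sigma(X_{\pg,\lambda_2}) - \sigma(X_{\pg,\lambda_1})$. Adding these,
\begin{equation*}
\bigl(e(X_{\pg,\lambda_2}) + \sigma(X_{\pg,\lambda_2})\bigr) - \bigl(e(X_{\pg,\lambda_1}) + \sigma(X_{\pg,\lambda_1})\bigr) = n(r) + I(r).
\end{equation*}
Thus the whole lemma reduces to showing $n(r) + I(r) = 0$.

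Next I would exploit additivity and the lantern computation. As noted in the paper, both invariants are additive over the decomposition $r = \prod r_i$: $n(r) = \sum n(r_i)$ and $I(r) = \sum I(r_i)$. So it suffices to check $n(r_i) + I(r_i) = 0$ for each lantern factor. By the remark immediately after Corollary \ref{cor_planar_decomposition}, each $r_i$ is a lantern relator appearing with either orientation, so either $(I(r_i), n(r_i)) = (1, -1)$ or $(I(r_i), n(r_i)) = (-1, 1)$ — the values from Lemma \ref{lem_endo_calculations} and their inverses. In both cases $I(r_i) + n(r_i) = 0$, hence $I(r) + n(r) = 0$, and the two quantities $e + \sigma$ agree.

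The only genuine subtlety — and the step I expect to need the most care — is justifying that the decomposition $r = \prod r_i$ consists \emph{only} of lanterns, with no 2-chain or other relators contributing. This is exactly where planarity is essential and where Corollary \ref{cor_planar_decomposition} does the real work: on a genus-zero surface the 2-chain relation (V) cannot occur, so the cancellation in $I(r_c) + n(r_c) = -7 + 11 = 4 \neq 0$ never enters. I would therefore lean on Corollary \ref{cor_planar_decomposition} as a black box rather than re-deriving the absence of 2-chains; granting it, the remaining argument is the short additivity-and-tabulation computation above.
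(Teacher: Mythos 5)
Your proposal is correct and takes essentially the same route as the paper, which obtains Lemma \ref{lem_obstruction} directly from Corollary \ref{cor_planar_decomposition} together with the observation that each lantern factor satisfies $I(r_i)+n(r_i)=0$ in either orientation, so that the changes $n(r)$ in Euler characteristic and $I(r)$ in signature (the latter via Theorem \ref{thm_lf}) cancel. The paper leaves this bookkeeping implicit; your write-up simply spells out the same additivity-and-tabulation argument.
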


And thus:

\begin{thm}\label{thm_obstruction}
If $X_1,X_2$ are Stein fillings of planar $(M,\xi)$, then $e(X_1) + \sigma(X_1) = e(X_2) + \sigma(X_2)$.
\end{thm}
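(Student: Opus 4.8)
The plan is to reduce the statement to Lemma \ref{lem_obstruction} by using Wendl's theorem to realize both fillings as Lefschetz fibrations over a fixed planar fiber sharing a common monodromy. First I would observe that a Stein filling is in particular a minimal strong symplectic filling of its contact boundary, so that Theorem \ref{wendl} applies to each of $X_1$ and $X_2$. Fixing a planar open book $\obd$ supporting $(M,\xi)$, Wendl's theorem then produces positive factorizations $\lambda_1, \lambda_2 \in Fac^+(\mon)$ with $X_i$ diffeomorphic to $X_{\pg,\lambda_i}$. The essential point, and the reason the planarity hypothesis is used, is that both $\lambda_1$ and $\lambda_2$ are factorizations of the \emph{same} mapping class $\mon$, namely the monodromy of the one fixed planar open book: Wendl's theorem guarantees that no stabilization of $\obd$ is required, which is exactly what fails in the higher-genus setting and is what keeps us inside a single $\mcg$ with $\pg = \pg_{0,b}$ planar. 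Since diffeomorphism preserves both $e$ and $\sigma$, it suffices to establish the equality for $X_{\pg,\lambda_1}$ and $X_{\pg,\lambda_2}$.

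At this stage the two factorizations $\lambda_1, \lambda_2 \in Fac^+(\mon)$ live in the mapping class group of the same planar surface $\pg = \pg_{0,b}$, so Lemma \ref{lem_obstruction} applies directly and gives $e(X_{\pg,\lambda_1}) + \sigma(X_{\pg,\lambda_1}) = e(X_{\pg,\lambda_2}) + \sigma(X_{\pg,\lambda_2})$, which completes the argument after transporting back along the diffeomorphisms $X_i \cong X_{\pg,\lambda_i}$.

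The main obstacle does not lie in this final assembly, which is essentially formal, but in the content already packaged into the two inputs. Lemma \ref{lem_obstruction} rests on Corollary \ref{cor_planar_decomposition}, that any two positive factorizations of a fixed mapping class on a planar surface differ by a sequence of lantern substitutions alone, combined with Theorem \ref{thm_lf} and the computation $I(r_l) + n(r_l) = 1 + (-1) = 0$ of Lemma \ref{lem_endo_calculations}, so that $e + \sigma$ is unchanged by each such substitution. The delicate point to verify carefully is therefore the applicability of Wendl's theorem: one must confirm that Stein fillings are minimal, so the hypothesis of Theorem \ref{wendl} is met, and that the identifications $X_i \cong X_{\pg,\lambda_i}$, together with the trivial symplectic enlargement in Wendl's statement, leave both $e$ and $\sigma$ unchanged.
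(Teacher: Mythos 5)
Your proposal is correct and follows the paper's own route exactly: the paper derives Theorem \ref{thm_obstruction} immediately from Theorem \ref{wendl} (realizing both Stein fillings as $X_{\pg,\lambda_i}$ with $\lambda_1,\lambda_2 \in Fac^+(\mon)$ for one fixed planar $\obd$) together with Lemma \ref{lem_obstruction}. Your additional care about minimality of Stein fillings and invariance of $e$ and $\sigma$ under the identifications is sound and consistent with the paper's (largely implicit) argument.
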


Note that, using Wendl's result (see the discussion preceding Theorem \ref{wendl}), and the observation that $e + \sigma$ is preserved under blowups, we may replace `Stein' in Theorem \ref{thm_obstruction} with `strong symplectic'. In fact, a recent strengthening of Theorem \ref{wendl} due to Niederkr\"{u}ger and Wendl \cite{nw}, and brought to our attention by Chris Wendl, extends the result to the more general case of \emph{weak} symplectic fillings.

\section{Curve configurations as obstructions to planarity}\label{sec_obstructions}

We begin with some terminology:

\begin{defin}
Let $\mon \in Dehn^{+}(\pg)$. We say $\mon$ \emph{contains} the positive word $\lambda$ if there is a positive word $\lambda'$ such that $\lambda' \circ \lambda \in Fac^{+}(\mon)$. Similarly, $\mon$ \emph{admits} relator $r = \lambda^{-1}_1\lambda_2$ if $\mon$ contains $\lambda_1$. Finally, relator $r$ is \emph{allowable} (on $\pg$) if $r$ consists only of twists about homologically non-trivial curves.
\end{defin}

\begin{obs}
Using the braid relation, it is easy to see that $\mon$ contains $\lambda$ if and only if $\mon$ admits a positive factorization $\tau_{\alpha_n} \tau_{\alpha_{n-1}} \cdots \tau_{\alpha_1}$ in which the twists of $\lambda$ appear in order; i.e. $\lambda =  \tau_{\alpha_{s_m}} \tau_{\alpha_{s_m-1}} \cdots \tau_{\alpha_{s_1}}$, where $s_i > s_{i-1}$ for all $i$.
\end{obs}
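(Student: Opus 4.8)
The plan is to prove the biconditional directly; the forward implication is immediate from the definitions, and all the content lies in the converse, which I would reduce to a sequence of Hurwitz moves. For the forward direction, suppose $\mon$ contains $\lambda$, so there is a positive word $\lambda'$ with $\lambda' \circ \lambda \in Fac^+(\mon)$. Spelling this concatenation out as a product of positive Dehn twists yields a positive factorization of $\mon$ in which the twists comprising $\lambda$ sit as a contiguous block at the right-hand (first-applied) end; a contiguous block is a fortiori an ordered subword, so this is already a factorization of the required form with $s_1 < \cdots < s_m$.

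For the converse, suppose $\mon = \tau_{\alpha_n} \cdots \tau_{\alpha_1}$ is a positive factorization in which $\lambda = \tau_{\alpha_{s_m}} \cdots \tau_{\alpha_{s_1}}$, with $s_1 < \cdots < s_m$, appears as an ordered but possibly non-contiguous subword; call these $m$ twists \emph{marked}. The plan is to use Hurwitz moves to slide the marked twists rightward until they are contiguous at the right-hand end, leaving the remaining unmarked twists as a positive prefix $\lambda'$ so that $\lambda' \circ \lambda \in Fac^+(\mon)$. The essential device is the braid relation (III) in the form $\tau_\alpha \tau_\beta = \tau_{\tau_\alpha \beta}\, \tau_\alpha$: applied to an adjacent pair with $\tau_\alpha$ marked and $\tau_\beta$ unmarked, it transposes the two twists while leaving the curve of the marked twist untouched — only the unmarked curve changes, to $\tau_\alpha \beta$ — and it preserves both positivity and the underlying mapping class.

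Concretely, I would push $\tau_{\alpha_{s_1}}$ past each unmarked twist to its right until it occupies the rightmost slot, then push $\tau_{\alpha_{s_2}}$ rightward until it lies immediately to the left of $\tau_{\alpha_{s_1}}$, and so on. Because each marked twist is only ever commuted past unmarked ones, the marked twists never change relative order, so after finitely many moves they constitute precisely the block $\tau_{\alpha_{s_m}} \cdots \tau_{\alpha_{s_1}} = \lambda$ at the right-hand end; termination is clear since each move strictly decreases the total number of unmarked twists lying to the right of a marked twist. The only real subtlety — and the single point at which the argument could fail — is the orientation of the braid relation: the opposite form $\tau_\alpha \tau_\beta = \tau_\beta\, \tau_{\tau_\beta^{-1}\alpha}$ would also move the marked twist rightward, but would alter \emph{its own} curve, producing merely a block mapping-class-equal to $\lambda$ rather than the word $\lambda$ itself. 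Choosing the form above keeps the marked curves verbatim, so that the resulting right-hand block is literally $\lambda$, exactly as the definition of ``contains'' demands.
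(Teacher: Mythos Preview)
Your proposal is correct and is precisely the argument the paper has in mind: the paper offers no proof beyond the phrase ``using the braid relation, it is easy to see,'' and your write-up is a faithful unpacking of that hint via Hurwitz moves, including the key point that the form $\tau_\alpha \tau_\beta = \tau_{\tau_\alpha\beta}\tau_\alpha$ of relation~(III) moves the marked twist rightward while leaving its curve unchanged.
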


From the discussion following Lemma \ref{lem_endo_calculations}, along with Theorem \ref{thm_obstruction}, it follows immediately that if positive $\mon$ admits allowable relator $r$, and $I(r) + n(r) = m$, then the supported contact manifold admits Stein fillings $X$ and $X'$ such that $e(X) + \sigma(X) = e(X') + \sigma(X') + m$. By Theorem \ref{thm_obstruction}, $(M,\xi)$ can therefore be planar only if $m=0$.

We may then restate Theorem \ref{thm_obstruction} as follows:

\begin{cor}\label{cor_obstruction_restatement}
Let $(M,\xi)$ be supported by $\obd$, where $\mon \in Dehn^{+}(\pg)$. Then if $\mon$ admits allowable relator $r$ satisfying $I(r) + n(r) \neq 0$, then $(M,\xi)$ is not supported by a planar open book decomposition.  
\end{cor}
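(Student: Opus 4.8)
The plan is to extract from the hypothesis a pair of genuine Stein fillings of $(M,\xi)$ whose values of $e+\sigma$ differ by exactly $m := I(r) + n(r)$, and then to invoke the contrapositive of Theorem \ref{thm_obstruction}. Since that theorem asserts that planarity forces all Stein fillings to share a common value of $e+\sigma$, exhibiting two fillings with distinct values immediately rules out planarity whenever $m \neq 0$.

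First I would unwind the definitions of \emph{admits} and \emph{contains}. Writing $r = \lambda_1^{-1}\lambda_2$ with $\lambda_1,\lambda_2$ positive words, the hypothesis that $\mon$ admits $r$ means $\mon$ contains $\lambda_1$, so there is a positive word $\mu$ with $\lambda := \mu \circ \lambda_1 \in Fac^+(\mon)$. Because $r$ is a relator we have $\lambda_1 = \lambda_2$ in $\mcg$, so $\lambda' := \mu \circ \lambda_2$ is again a positive word representing $\mon$, hence $\lambda' \in Fac^+(\mon)$, and by construction $\lambda'$ is an $r$-substitution of $\lambda$ in the concatenation sense of Section \ref{sec_mcg}. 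The allowability hypothesis is used precisely here: the twists appearing in $\lambda_1$ and $\lambda_2$ are about homologically non-trivial curves, and taking the completing word $\mu$ to consist of non-trivial twists as well, both $\lambda$ and $\lambda'$ are positive factorizations of $\mon$ into twists about homologically non-trivial curves. By the discussion of Section \ref{sec_lef_obd} (via Eliashberg) each of $X_{\pg,\lambda}$ and $X_{\pg,\lambda'}$ then carries a Stein structure, and since both have boundary the open book $\obd$ supporting $(M,\xi)$, each is a Stein filling of $(M,\xi)$ with induced contact structure $\xi$.

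Next I would compute the difference of $e+\sigma$ across the substitution. Theorem \ref{thm_lf} gives $\sigma(X_{\pg,\lambda'}) - \sigma(X_{\pg,\lambda}) = I(r)$, while the handlebody description recalled after Lemma \ref{lem_endo_calculations} gives $e(X_{\pg,\lambda'}) - e(X_{\pg,\lambda}) = n(r)$. Adding these,
\[
\bigl(e+\sigma\bigr)(X_{\pg,\lambda'}) - \bigl(e+\sigma\bigr)(X_{\pg,\lambda}) = I(r) + n(r) = m \neq 0 .
\]
Thus $X_{\pg,\lambda}$ and $X_{\pg,\lambda'}$ are two Stein fillings of $(M,\xi)$ with distinct values of $e+\sigma$. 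Were $(M,\xi)$ supported by a planar open book, Theorem \ref{thm_obstruction} would force these two values to coincide, a contradiction; hence $(M,\xi)$ admits no planar supporting open book.

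The only genuinely delicate point is the one flagged in the second paragraph: guaranteeing that both factorizations are into homologically non-trivial twists, so that Eliashberg's theorem produces Stein rather than merely Lefschetz fillings. The allowability condition is tailored to control exactly the twists altered by the $r$-substitution, and the remaining care is in choosing the completing word $\mu$ so that no separating or null-homotopic twists are introduced, which is where the standing assumption that $\obd$ is a positive open book supporting $(M,\xi)$ enters. Everything else is bookkeeping, since the two substantive inputs — the signature formula of Theorem \ref{thm_lf} and the planar rigidity of Theorem \ref{thm_obstruction} — are already in hand.
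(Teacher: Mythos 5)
Your proof is correct and takes essentially the same route as the paper, whose own proof is the short paragraph preceding the corollary: perform the $r$-substitution on a positive factorization containing $\lambda_1$ to obtain two Stein fillings whose values of $e+\sigma$ differ by $I(r)+n(r)$ (via Theorem \ref{thm_lf} for $\sigma$ and the handlebody count for $e$), then apply Theorem \ref{thm_obstruction}. Your added care in arranging the completing word $\mu$ to involve only homologically non-trivial twists is a legitimate point the paper treats as immediate (it notes in Section \ref{sec_comparisons} that chain relations let one replace separating twists, and alternatively its strong-symplectic strengthening of Theorem \ref{thm_obstruction} would cover such factorizations), so this is a refinement rather than a different method.
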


We proceed with a particularly simple realization of this obstruction. From Lemma \ref{lem_endo_calculations}, we see that if $\mon$ admits the 2-chain relator, then $(\pg,\mon)$ is not stabilization-equivalent to any planar open book. We can generalize this as follows:

\begin{defin}
Suppose $\obd$ is an open book decomposition satisfying $\mon \in Dehn^{+}(\pg)$, and let $\pg' \cong \pg_{g,b}$ be a subsurface of $\pg$ such that $\mon$ contains the multicurve $\partial \pg'$. Then we say $\mon$ \emph{bounds} $\pg_{g,b}$.
\end{defin}

Consider the following well-known (see e.g. \cite{waj}) generalization of the 2- and 3-chain relations introduced in section \ref{sec_mcg}. Let $\alpha_1, \dots, \alpha_n$ be a chain of curves; i.e. such that each pair $\alpha_i$, $\alpha_{i+1}$ have a single point of intersection, while curves of non-consecutive index are disjoint. Note that for $n$ even, a regular neighborhood of such a chain is a surface of genus $n/2$ with 1 boundary component, which we denote $\delta$, while if $n$ is odd, a regular neighborhood is a surface of genus $(n-1)/2$ with 2 boundary components, which we denote $\delta_1$ and $\delta_2$.

\begin{lem}(see, e.g. \cite{waj})\label{lem_chain}
For even $n$, $\tau_\delta^{-1}(\tau_{\alpha_1} \dots \tau_{\alpha_n})^{2n+2}$ is a relator, while for odd $n$, $\tau_{\delta_1}^{-1}\tau_{\delta_2}^{-1}(\tau_{\alpha_1} \dots \tau_{\alpha_n})^{n+1}$ is a relator.
\end{lem}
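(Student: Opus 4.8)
The plan is to realize a regular neighborhood $N$ of the chain $\alpha_1,\dots,\alpha_n$ as a double cover of a disc $D$ branched over $n+1$ points, and to deduce both relations from the standard central relation in the braid group $B_{n+1}$. First I would fix the branched cover $p\colon N\to D$, with the $n+1$ branch points arranged in a row and each $\alpha_i$ the preimage of an arc joining the $i$-th and $(i+1)$-th branch point, so that $\tau_{\alpha_i}$ is precisely the lift of the elementary half-twist $\sigma_i$ exchanging these two points. A Riemann--Hurwitz count gives $\chi(N)=1-n$, and the number of boundary components of $N$ is governed by the $\mathbb{Z}/2$-monodromy of $p$ around $\partial D$, which is $(n+1)\bmod 2$: for $n$ even the single curve $\partial D$ has connected preimage, a curve $\delta$ double-covering it, and $N\cong\pg_{n/2,1}$; for $n$ odd the preimage splits into two curves $\delta_1,\delta_2$, each mapped homeomorphically, and $N\cong\pg_{(n-1)/2,2}$. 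This matches the surfaces named in the statement.

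Next I would transport the relevant relation across the cover. Since consecutive curves meet once and non-consecutive curves are disjoint, the twists $\tau_{\alpha_i}$ satisfy exactly the braid relations (types (II) and (III) of Theorem \ref{thm_relations}); hence $\sigma_i\mapsto\tau_{\alpha_i}$ extends to a homomorphism $\Theta\colon B_{n+1}\to\Gamma_N$ from the mapping class group $B_{n+1}$ of the $(n+1)$-punctured disc (fixing $\partial D$ pointwise) to the mapping class group $\Gamma_N$ of $N$, and by construction $\Theta$ agrees with the geometric lifting of symmetric mapping classes on generators, hence everywhere. I would then invoke the classical identity $(\sigma_1\cdots\sigma_n)^{n+1}=\Delta^2$, where $\Delta^2$ generates the centre of $B_{n+1}$ and is the Dehn twist $T_{\partial D}$ about a boundary-parallel curve. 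Writing $c=\tau_{\alpha_1}\cdots\tau_{\alpha_n}$ and applying $\Theta$ yields $c^{\,n+1}=\Theta(T_{\partial D})$, so everything reduces to identifying the lift of the boundary twist $T_{\partial D}$.

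The final, and I expect hardest, step is this identification, where the two parities genuinely diverge and where the exponents in the statement are forced. For $n$ odd the collar of $\partial D$ lifts to two annuli each mapped homeomorphically, so $T_{\partial D}$ lifts to the product of full twists $\tau_{\delta_1}\tau_{\delta_2}$, giving $c^{\,n+1}=\tau_{\delta_1}\tau_{\delta_2}$ directly. For $n$ even the collar lifts to a single connected annulus double-covering it, and a winding-number computation in this annulus shows that $T_{\partial D}$ does not admit a boundary-fixing lift; rather its square lifts to a single full twist, so that $T_{\partial D}^2$ lifts to $\tau_\delta$ while $T_{\partial D}$ itself corresponds to only a half-twist. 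Squaring $c^{\,n+1}=\Theta(T_{\partial D})$ then gives $c^{\,2n+2}=\Theta(T_{\partial D}^2)=\tau_\delta$, which accounts for the extra factor of two appearing in the even case. The main obstacle is thus the careful collar/winding-number bookkeeping in the even case: one must verify that the connected double cover of the boundary collar converts a full twist downstairs into a half-twist upstairs, and in particular that it is $T_{\partial D}^2$, not $T_{\partial D}$ itself, which lifts to a genuine Dehn twist fixing $\partial N$ pointwise. As sanity checks, the cases $n=1$ (an annulus, where $\tau_{\alpha_1}^2=\tau_{\delta_1}\tau_{\delta_2}$ is immediate) and $n=2$ (the $2$-chain relation, relation (V) of Theorem \ref{thm_relations}) recover the known base relations.
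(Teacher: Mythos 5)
Your proposal is correct in substance, and it is genuinely different from what the paper does: the paper offers no proof of Lemma \ref{lem_chain} at all, simply citing Wajnryb \cite{waj}, whose derivation of the chain relations is elementary and presentation-theoretic; you instead give the classical hyperelliptic double-cover argument. Your device of defining $\Theta\colon B_{n+1}\to\Gamma_N$ purely from the braid presentation (using that the chain twists satisfy relations (II) and (III) of Theorem \ref{thm_relations}) is a good one, since it sidesteps the hard direction of the Birman--Hilden theorem entirely: all you then need is the easy direction, namely that an isotopy rel boundary downstairs lifts to an isotopy upstairs whose restriction to $\partial N$ is constant, in order to identify $\Theta$ with geometric lifting on more than just the generators. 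Combined with $(\sigma_1\cdots\sigma_n)^{n+1}=\Delta^2=T_{\partial D}$ and your Riemann--Hurwitz bookkeeping (which correctly produces $\pg_{n/2,1}$ and $\pg_{(n-1)/2,2}$), this yields both relations, and your base cases $n=1,2$ check out against relation (V).

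One claim in your even case is misstated, though the mechanism you use is the right one. You assert that $T_{\partial D}$ ``does not admit a boundary-fixing lift.'' It does: the two lifts of any symmetric diffeomorphism differ by the deck involution $\iota$, whose restriction to $\delta$ is rotation by $\pi$, so exactly one of the two lifts is the identity on $\delta$; boundary-fixing lifting is thus a well-defined homomorphism in the even case as well, and $\Theta(T_{\partial D})=c^{\,n+1}$ (with $c=\tau_{\alpha_1}\cdots\tau_{\alpha_n}$) is itself such a lift --- so your assertion contradicts your own displayed equation. What is true, and what your winding-number computation in the collar actually shows, is that no lift of $T_{\partial D}$ \emph{supported in the collar} fixes $\partial N$ pointwise: the collar-supported lift rotates $\delta$ by $\pi$, while the lift that is the identity on $\delta$ must equal $\iota$ outside the collar, i.e.\ it is the boundary-adjusted hyperelliptic involution rather than any twist. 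It is only the square --- the collar-supported, boundary-fixing lift of $T_{\partial D}^2$ --- that is the honest Dehn twist $\tau_\delta$. Since your argument proceeds by squaring in exactly this way, the conclusion $c^{\,2n+2}=\tau_\delta$ stands; you need only rephrase ``no boundary-fixing lift'' as ``no collar-supported boundary-fixing lift,'' and carry out the continuity argument you already flag as the main obstacle (the lifted isotopy covers the constant identity on $\partial D$, hence is pointwise the identity on $\partial N$ for all time).
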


Using the $n$-chain relation, we see that if $\mon$ bounds either $\pg_{g,1}$ or $\pg_{g,2}$, $g>1$, then $\mon$ bounds $\pg_{g',2}$ for each $2 \leq g' \leq g$. In particular, $\mon$ bounds $\pg_{1,2}$, and so any of these is an obstruction to planarity.

We may similarly obtain relators (and thus obstructions to planarity) if $\mon$ bounds $\pg_{1,b}$ for $2 \leq b \leq 9$ (see \cite{ko} for an explicit construction, from which one may easily calculate $I$ and $n$ for each relator - indeed, each can be decomposed as a concatenation of a single 2-chain relator with some number of lanterns, so the sum $I(r)+n(r)$ for each such $r$ agrees with that of the 2-chain). Note that the non-existence of elliptic fibrations with more than 9 disjoint sections means that there is no such relator for $b>9$.

In the genus 2 case, Onaran \cite{o} has given relators for $\pg_{2,b}$, $b \leq 8$. Analogously to the previous case, there can be no such relator for $b > 12$; it is not known whether relators exist for the remaining cases $9 \leq b \leq 12$. It is again straightforward to check that each satisfies $I(r) + n(r) \neq 0$, and so gives an obstruction to planarity; i.e. if $\mon$ bounds $\pg_{2,b}$ for $b \leq 8$, then the contact manifold supported by $\obd$ is not planar.

Of course one may play a similar game with arbitrary relations, such as that of Example \ref{ex_nonstandard}, which need not necessarily bound any subsurface.

\section{Comparisons with existing obstructions}\label{sec_comparisons}

In this section we make some comparisons between the implications of the results of this paper and those of \cite{et1} and \cite{oss}. To that end, we recall here the relevant theorems from these papers:

The first known obstruction to planarity is as follows:

\begin{thm}\label{thm_etnyre}
(Etnyre \cite{et1}) If $X$ is a symplectic filling of $(M,\xi)$ which is supported by a planar open book decomposition, then $b_2^+(X)=b_2^0(X)=0$, $\partial X$ is connected, and if $M$ is an integral homology sphere then the intersection form $Q_X$ is diagonalizable over the integers.
\end{thm}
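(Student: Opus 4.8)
The plan is to derive the theorem from Wendl's Theorem \ref{wendl}, which streamlines the pseudoholomorphic-curve arguments of \cite{et1}. First I would reduce to the case that $X$ is \emph{minimal}. An arbitrary strong symplectic filling is a sequence of symplectic blow-ups of a minimal one, and each blow-up replaces $X$ by $X \# \overline{\mathbb{CP}}{}^2$, appending a single $\langle -1 \rangle$ summand to $Q_X$. Thus a blow-up increases $b_2^-$ by one, leaves $b_2^+$, $b_2^0$ and $\partial X$ untouched, and preserves both negative-definiteness and (by the lattice fact below) diagonalizability of the form; so it suffices to treat minimal $X$. For such $X$, Theorem \ref{wendl} supplies $\lambda \in Fac^{+}(\mon)$ with $X = X_{\pg,\lambda}$, where $\pg = \pg_{0,b}$ is the planar page of the supporting open book $\obd$. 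Connectedness of $\partial X$ is then immediate: as a Lefschetz fibration over $D^2$ with connected fibre, $X_{\pg,\lambda}$ is connected with boundary the single three-manifold $M$.

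For $b_2^+(X) = b_2^0(X) = 0$ I would read the intersection form off the handlebody picture of Section \ref{sec_lef_obd}. Since $\pg = \pg_{0,b}$ is planar, $\pg \times D^2$ is a boundary connected sum of $(b-1)$ copies of $S^1 \times D^3$, and $X$ is built by attaching $2$-handles along the vanishing cycles $\alpha_1, \dots, \alpha_n$ with framing $-1$ relative to the page. Hence $H_2(X;\mathbb{Z}) \cong \ker\big(\mathbb{Z}^n \to H_1(\pg;\mathbb{Z})\big)$, the $i$-th generator mapping to $[\alpha_i]$, and $Q_X$ is the restriction of the linking matrix of this diagram to the kernel. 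The geometric heart of the matter is that \emph{planarity} forces $Q_X$ to be negative definite — every simple closed curve on $\pg_{0,b}$ separates, so each $\alpha_i$ bounds a subsurface and encircles a subset of the holes — whence $b_2^+ = b_2^0 = 0$ at once.

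The diagonalizability when $M$ is an integral homology sphere I would organize around the following elementary lattice fact, which I expect to be the clean engine of this step: \emph{a negative-definite unimodular lattice that splits off the standard diagonal lattice $\langle -1\rangle^N$ as an orthogonal direct summand is itself diagonal}. Indeed, vectors of square $-1$ in any negative-definite unimodular lattice $L$ are pairwise orthogonal up to sign (Cauchy--Schwarz forces their pairwise products, integers of absolute value $<1$, to vanish), so they span a diagonal sublattice of rank at most $\mathrm{rk}(L)$, with equality — i.e.\ $L \cong \langle -1\rangle^{\mathrm{rk}(L)}$ — exactly when $L$ has the maximal number $2\,\mathrm{rk}(L)$ of them; counting these vectors across a splitting $\langle -1\rangle^N = L_1 \oplus L_2$ forces both summands to achieve the maximum. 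Now when $M$ is a homology sphere, $H_1(X) = 0$ and Poincar\'e--Lefschetz duality makes $Q_X$ unimodular; combined with negative-definiteness, the task reduces to exhibiting $Q_X$ as an orthogonal summand of some $\langle -1\rangle^N$.

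The main obstacle is precisely the production of that diagonal ambient lattice, equivalently the precise control of the off-diagonal entries of $Q_X$ coming from fibre intersections $\alpha_i \cap \alpha_j$. I see two routes. The first is to argue directly from the planar handle structure that the thimbles realize $(H_2(X),Q_X)$ as a primitive sublattice of $\langle -1\rangle^n$, so that unimodularity splits it off; the delicate point is that vanishing cycles encircling ``crossing'' subsets of holes genuinely intersect, so the embedding is not the naive coefficient map and must be set up with care. The second, closer in spirit to \cite{et1}, is to cap $X$ symplectically to a closed rational or ruled surface, blow up once to reach a manifold with diagonal form $\langle 1\rangle \oplus \langle -1\rangle^k$, and extract $Q_X$ as a negative-definite orthogonal summand; this is clean given the structure theorem for symplectic $4$-manifolds containing a symplectic sphere of non-negative square, but that input lies outside the tools assembled here and is in fact the substance of Etnyre's original argument. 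I would attempt the first route and keep the second in reserve.
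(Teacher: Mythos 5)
You should first know that the paper contains no proof of this statement: it is quoted from Etnyre \cite{et1} as background for Section \ref{sec_comparisons}, so your proposal can only be measured against the literature. Your overall strategy --- deducing Etnyre's restrictions from Theorem \ref{wendl} --- is logically legitimate, since Wendl's proof is independent of \cite{et1}, and several ingredients are sound: the blow-down reduction to a minimal filling, the identification $H_2(X) \cong \ker(\mathbb{Z}^n \to H_1(\pg))$, and your lattice lemma, whose unit-vector count (essentially the uniqueness of orthogonal decomposition of definite unimodular lattices) is correct, including the step that an equal-rank unimodular sublattice has index one. Two caveats before the real gaps: Etnyre's theorem concerns weak fillings and, for the connectedness claim, semi-fillings whose boundary is a priori disconnected, whereas Theorem \ref{wendl} as quoted assumes a minimal strong filling with boundary exactly $(M,\xi)$; so ``connectedness of $\partial X$ is immediate'' presupposes what is to be shown unless you invoke the semi-filling statement of \cite{we} (and \cite{nw} to cover weak fillings). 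Also $H_1(X)=0$ is not automatic when $M$ is an integral homology sphere, though unimodularity of $Q_X$ on $H_2/\mathrm{torsion}$ follows from Poincar\'e--Lefschetz duality regardless.

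The genuine gaps sit at exactly the two load-bearing steps, both of which you flag but neither of which you fill. \emph{(i) Definiteness:} the parenthetical ``every simple closed curve on $\pg_{0,b}$ separates \dots whence $b_2^+ = b_2^0 = 0$ at once'' is not an argument. $Q_X$ is the restriction of the linking matrix to the kernel, and the off-diagonal entries --- linking numbers of vanishing cycles, governed by how the enclosed hole-sets nest, cross, or are disjoint --- carry both signs; nothing formal about curves being separating forces this restriction to be negative definite. Definiteness of positive planar Lefschetz fibrations is a theorem, and in the literature it is extracted from symplectic geometry (in \cite{et1}, by capping off to a closed manifold containing a symplectic sphere of nonnegative square and applying McDuff's rational/ruled classification), not read off the handle diagram. \emph{(ii) Diagonalizability:} your lattice lemma is the easy half; the hard half is exhibiting $(H_2(X),Q_X)$ as an orthogonal summand of some $\langle -1 \rangle^N$, and this you leave open. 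Route 1 fails as stated, because the coefficient map is not an isometric embedding of $(\mathbb{Z}^n, \text{linking form})$ into $\langle -1 \rangle^n$ --- precisely because of those off-diagonal linkings there is no diagonal ambient lattice anywhere in the handle picture --- while route 2 is, as you say yourself, Etnyre's original argument, placed outside your toolkit. That the embedding must genuinely use planarity, and cannot be replaced by lattice formalities, is shown by a concrete example: $-\Sigma(2,3,5)$ is an integral homology sphere bounding the negative definite $E_8$-manifold, whose unimodular form is not diagonalizable; so ``negative definite unimodular filling of a homology sphere'' alone does not suffice. As it stands, your proposal is an accurate plan whose two decisive steps remain unproven.
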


For the obstruction of \cite{oss}, recall that (a version of) the Heegaard Floer homology, $HF^+(M)$, of a 3-manifold $M$ is an invariant which
is a module over the polynomial algebra $\mathbb{Z}[U]$. Moreover, the decreasing sequence of
submodules  $U^d \cdot HF^+(M)_{d=0}^\infty$ stabilizes for sufficiently large $d$. The `contact invariant' $c^+(\xi)$ is an element of $HF^+(-M)$.

\begin{thm}\label{thm_oss}
(Ozsv\'{a}th, Stipsicz and Szab\'{o} \cite{oss}) Suppose that $(M,\xi)$ is supported by planar $\obd$. Then its contact invariant $c^+(\xi)$ is contained in $U^d \cdot HF^+(-M)$ for all $d \in \mathbb{N}$.
\end{thm}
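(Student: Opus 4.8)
The plan is to convert the divisibility condition into a statement about reduced Floer homology and then to feed the special geometry of a genus-$0$ page into a cobordism-map argument. First I would use the standard algebraic reformulation: the submodule $\bigcap_d U^d\cdot HF^+(-M)$ coincides with the image of the natural map $HF^\infty(-M)\to HF^+(-M)$, whose cokernel is the reduced group $HF_{red}(-M)$. Hence the assertion that $c^+(\xi)$ lies in $U^d\cdot HF^+(-M)$ for all $d$ is equivalent to the vanishing of the image of $c^+(\xi)$ in $HF_{red}(-M)$, i.e. to the contact class having no reduced part. This is the form I would aim to verify.

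Next I would exploit the planar open book to present $(M,\xi)$ concretely. Writing $\varphi=\tau_{\gamma_1}\cdots\tau_{\gamma_k}$ as a product of positive Dehn twists about curves in the planar page $\Sigma=\Sigma_{0,b}$, the trivial open book $(\Sigma,\mathrm{id})$ supports the standard tight structure $\xi_0$ on $\#^{b-1}(S^1\times S^2)$, and $(M,\xi)$ is built from $(\#^{b-1}(S^1\times S^2),\xi_0)$ by Legendrian (contact $(-1)$) surgeries along the $\gamma_i$, i.e. by attaching Weinstein $2$-handles. The decisive base case is that $\#^{b-1}(S^1\times S^2)$ has vanishing reduced Floer homology ($HF_{red}=0$), so every class there, and in particular $c^+(\xi_0)$, is automatically infinitely $U$-divisible. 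I would then bring in the naturality of the Ozsv\'ath--Szab\'o contact invariant under these Stein cobordisms, together with the fact that the induced maps commute with the $U$-action.

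The main obstacle is the \emph{direction} in which naturality operates. The Weinstein cobordism $W$ assembled from the twists runs from $(\#^{b-1}(S^1\times S^2),\xi_0)$ up to $(M,\xi)$, so the induced map on the reversed boundaries, $F^+_{-W}\colon HF^+(-M)\to HF^+(-\#^{b-1}(S^1\times S^2))$, sends $c^+(\xi)\mapsto c^+(\xi_0)$. This only records that $c^+(\xi)$ \emph{maps to} an infinitely divisible class, which does not by itself force $c^+(\xi)$ to lie in the image of $U^d$ for all $d$. Tellingly, the naive version of this argument never sees the genus of the page, so it would apply verbatim to arbitrary open books and thereby ``prove'' a false statement; planarity must therefore enter in a genuinely genus-$0$ way, and isolating that input is the crux.

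To supply the missing genus-$0$ input, the route I would pursue is to use that a planar page yields a particularly simple surgery diagram for $M$ (or $-M$) in $\#^{b-1}(S^1\times S^2)$, with components the twist curves sitting unknotted and unlinked in the page. I would then run the Heegaard Floer surgery exact triangle across these curves, anchoring the induction at the base case $HF_{red}(\#^{b-1}(S^1\times S^2))=0$ and tracking the contact class through its naturality under the triangle maps, the goal being to show that at every stage the class stays in the image of the tower rather than developing a reduced component. Proving that this control persists through the entire induction --- precisely the place where the genus-$0$ hypothesis does real work, in contrast to the uncontrolled higher-genus situation --- is the step I expect to be hardest to make rigorous.
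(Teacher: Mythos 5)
First, a point of comparison: the paper does not prove this statement at all --- it is quoted as background (Theorem \ref{thm_oss}) from Ozsv\'ath--Stipsicz--Szab\'o \cite{oss}, so your attempt can only be measured against the argument in \cite{oss} itself, and measured that way it has a genuine gap which, to your credit, you flag yourself. Your opening reduction is fine: infinite $U$-divisibility of $c^+(\xi)$ is equivalent to the vanishing of its image in $HF_{red}(-M)$, since $U$ acts nilpotently on the reduced group. Your diagnosis of the naturality problem is also exactly right: the Weinstein cobordism built from the twists induces $F^+\colon HF^+(-M)\to HF^+\bigl(-\#^{b-1}(S^1\times S^2)\bigr)$ carrying $c^+(\xi)$ to $c^+(\xi_0)$, and divisibility of the \emph{image} class is vacuous. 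But the proposed repair --- an induction through the surgery exact triangle over the twist curves, anchored at $HF_{red}(\#^{b-1}(S^1\times S^2))=0$ --- supplies no mechanism. The triangle maps for the intermediate surgeries are sums of cobordism maps over spin$^c$ structures that do not carry contact classes to contact classes; membership in the $U$-tower is not a property preserved by the maps and quotients appearing in the triangle; and the claim that the twist curves sit ``unknotted and unlinked'' is unjustified --- disjoint curves on a planar page are in general homologically essential in $\#^{b-1}(S^1\times S^2)$ and form a genuinely linked configuration, so no curve-by-curve unknot computation is available. The step you single out as ``hardest to make rigorous'' is therefore not a technical residue but the entire content of the theorem: nothing in the sketch identifies where genus zero acts.

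There is also a hypothesis-level gap: you begin by writing $\varphi$ as a product of \emph{positive} Dehn twists, i.e.\ you assume Stein fillability, whereas the theorem applies to arbitrary planar $(M,\xi)$ (and there exist tight non-fillable structures with $c^+(\xi)\neq 0$, for which the statement is not vacuous). This part is repairable: Hurwitz-shuffling all negative twists to one end writes $(M,\xi)$ as the result of contact $(+1)$-surgeries on a positively-monodromed planar open book, and $c^+$ \emph{is} natural in the useful direction under contact $(+1)$-surgery --- but that reduction still leaves the positive case, i.e.\ your main gap, untouched. For contrast, the argument of \cite{oss} exploits planarity through the \emph{binding} of the open book rather than through a chosen factorization of the monodromy: using naturality of the contact invariant in the tractable direction (adding negative boundary-parallel twists, i.e.\ contact $(+1)$-surgeries along binding-parallel curves), one trades $(M,\xi)$ for open books with large boundary twisting, where the genus-zero page --- whose capped-off fiber is a sphere --- makes the resulting Floer-theoretic computation close up and forces the class into the image of the $U$-tower. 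No positive factorization is assumed there, and no exact-triangle induction over the vanishing cycles appears; so your route is not a variant of the known proof with a hard step deferred, but a different strategy whose crucial step would still have to be invented.
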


This last obstruction is of course quite difficult to apply directly, as $HF^+$, as well as $c^+$, are in general quite difficult to calculate. We have however the following corollaries:

\begin{cor}\label{cor1_oss} \cite{oss} Suppose that $c^+(\xi) \neq 0$ and the associated spin$^c$ structure $\mathfrak{s}(\xi)$ is nontorsion (that is, $c_1(\mathfrak{s}(\xi))$ is not a torsion class). Then $\xi$ is not supported by a planar open book decomposition.
\end{cor}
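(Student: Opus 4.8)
The plan is to argue by contraposition, deducing the statement directly from Theorem \ref{thm_oss}. Suppose, for contradiction, that $(M,\xi)$ \emph{is} supported by a planar open book decomposition; I will show this forces $c^+(\xi) = 0$, contradicting the hypothesis. By Theorem \ref{thm_oss}, planarity gives $c^+(\xi) \in U^d \cdot HF^+(-M)$ for every $d \in \mathbb{N}$, so $c^+(\xi)$ lies in the stable image $\bigcap_{d \geq 0} U^d \cdot HF^+(-M)$.

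Next I would localize at the relevant spin$^c$ summand. Heegaard Floer homology splits as $HF^+(-M) = \bigoplus_{\mathfrak{s}} HF^+(-M,\mathfrak{s})$, this splitting is respected by the action of $U$, and the contact invariant $c^+(\xi)$ lies entirely in the summand $HF^+(-M,\mathfrak{s}(\xi))$ indexed by the spin$^c$ structure of $\xi$. It therefore suffices to show that $\bigcap_{d\geq 0} U^d \cdot HF^+(-M,\mathfrak{s}(\xi)) = 0$.

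The crux — and the one genuinely nontrivial ingredient — is the structural input of Ozsv\'{a}th and Szab\'{o}: when $\mathfrak{s}$ is a \emph{non-torsion} spin$^c$ structure, $HF^+(-M,\mathfrak{s})$ is finitely generated as an abelian group, equivalently $U$ acts nilpotently, so that $U^{d_0}\cdot HF^+(-M,\mathfrak{s}) = 0$ for some $d_0$. This is exactly where the hypothesis that $c_1(\mathfrak{s}(\xi))$ is non-torsion enters: for a torsion spin$^c$ structure $HF^+$ contains an infinite $U$-tower $\mathcal{T}^+$ with nonzero stable image, and the argument would break down. Everything else is formal manipulation of the module structure, so granting this nilpotence — which I would cite from the foundational Heegaard Floer papers rather than reprove — we obtain $c^+(\xi) \in U^{d_0}\cdot HF^+(-M,\mathfrak{s}(\xi)) = 0$, contradicting $c^+(\xi)\neq 0$ and completing the argument.
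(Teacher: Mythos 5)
Your argument is correct and is essentially the standard one: the paper itself states this corollary without proof, citing \cite{oss}, and your deduction from Theorem \ref{thm_oss} — restrict to the spin$^c$ summand $HF^+(-M,\mathfrak{s}(\xi))$, then use the Ozsv\'ath--Szab\'o fact that this summand is finitely generated for non-torsion $\mathfrak{s}(\xi)$, so that (since every element of $HF^+$ is annihilated by some power of $U$) the stable image $\bigcap_{d} U^d \cdot HF^+(-M,\mathfrak{s}(\xi))$ vanishes — is precisely the proof given in \cite{oss}. Your identification of where non-torsion enters (a torsion spin$^c$ structure contributes a tower $\mathcal{T}^+$ with full stable image) is also the right explanation of why the hypothesis cannot be dropped.
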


\begin{cor}\label{cor2_oss}
\cite{oss} Suppose that $M$ is a rational homology 3-sphere $(M,\xi)$, $\xi$  a contact structure with $c_1(\mathfrak{s}(\xi)) = 0$, and that $(M,\xi)$ admits a
Stein filling $(X, J)$ such that $2c_1(X, J) \neq 0$. Then $\xi$ is not supported by a planar open
book decomposition.
\end{cor}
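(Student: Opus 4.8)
The plan is to derive the statement from the Ozsv\'{a}th--Stipsicz--Szab\'{o} obstruction (Theorem \ref{thm_oss}) by a contradiction argument whose heart is an absolute grading computation. Suppose $\xi$ were supported by a planar open book. Since $M$ is a rational homology sphere every spin$^c$ structure on $M$ is torsion, and the hypothesis $c_1(\mathfrak{s}(\xi)) = 0$ in particular makes $\mathfrak{s}(\xi)$ a torsion, self-conjugate spin$^c$ structure. Because $(M,\xi)$ is Stein fillable, its contact invariant is nonzero, $c^+(\xi) \neq 0$ in $HF^+(-M,\mathfrak{s}(\xi))$ (a standard consequence of Stein fillability). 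The whole argument plays this nonvanishing against the conclusion of Theorem \ref{thm_oss}.

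First I would unwind Theorem \ref{thm_oss} in this torsion setting. For a rational homology sphere and a torsion spin$^c$ structure, $HF^+(-M,\mathfrak{s}(\xi))$ splits as the $U$-tower $\mathcal{T}^+ = \mathbb{Z}[U,U^{-1}]/U\cdot\mathbb{Z}[U]$ together with a finitely generated $U$-torsion summand. The tower is $U$-divisible while the torsion part is annihilated by all sufficiently high powers of $U$, so $\bigcap_d U^d\cdot HF^+(-M,\mathfrak{s}(\xi)) = \mathcal{T}^+$. Thus the conclusion of Theorem \ref{thm_oss} says exactly that, under planarity, the nonzero class $c^+(\xi)$ must lie in the tower. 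This is the same mechanism as in Corollary \ref{cor1_oss}, but there the \emph{non}-torsion hypothesis removes the tower entirely and the contradiction is immediate; here the tower is present, so a finer input is required.

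That finer input is the grading. The tower $\mathcal{T}^+$ is supported in a single residue class modulo $2$ and bounded below by the correction term $d(-M,\mathfrak{s}(\xi))$, so membership of $c^+(\xi)$ imposes both an inequality and a parity constraint on its grading. On the other hand $\mathrm{gr}(c^+(\xi)) = -d_3(\xi) - \tfrac12$, and for the Stein filling one has $d_3(\xi) = \tfrac14\big(c_1(X,J)^2 - 2e(X) - 3\sigma(X)\big)$. By Etnyre's theorem (Theorem \ref{thm_etnyre}) a planar filling satisfies $b_2^+ = b_2^0 = 0$, hence $X$ is negative definite and $c_1(X,J)^2 \leq 0$. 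I would then combine the tower constraint with the Ozsv\'{a}th--Szab\'{o} negative-definite inequality $c_1(X,J)^2 + b_2(X) \leq 4\,d(M,\mathfrak{s}(\xi))$ to pin $c_1(X,J)^2$ to its extremal value and read off the relevant residue of $\mathrm{gr}(c^+(\xi))$. The role of the hypothesis $2c_1(X,J) \neq 0$ is to force $\mathfrak{s}(X,J)$ and its conjugate $\overline{\mathfrak{s}(X,J)}$ to be distinct spin$^c$ structures on $X$ that nonetheless restrict to the same $\mathfrak{s}(\xi)$ and share the value $c_1(X,J)^2 = (-c_1(X,J))^2$; this is precisely the degeneracy the rigid tower constraint forbids, producing the contradiction.

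The main obstacle, and where I expect to spend the most care, is the bookkeeping in this last step: fixing the normalization of $d_3$ and of the absolute $\mathbb{Q}$-grading, correctly tracking the orientation reversal $M \rightsquigarrow -M$ in passing between $d(M,\mathfrak{s}(\xi))$ and the tower bottom $d(-M,\mathfrak{s}(\xi)) = -d(M,\mathfrak{s}(\xi))$, and verifying that it is genuinely the failure of $2c_1(X,J)$ to vanish --- rather than mere non-torsion-ness of $c_1(X,J)$ --- that violates the congruence forced by the tower. Everything else (the structure of $HF^+$, nonvanishing of $c^+$ for Stein fillings, Etnyre's negative-definiteness, and the reduction of Theorem \ref{thm_oss}) is essentially formal once these conventions are fixed.
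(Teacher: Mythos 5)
The paper itself offers no proof of this corollary: it is quoted from \cite{oss} (with the Observation explaining that the hypotheses have been deliberately strengthened from the original), so the benchmark is the argument in \cite{oss}. Your setup is correct and matches the first half of that argument: $c^+(\xi)\neq 0$ for Stein fillable structures, $\bigcap_d U^d\cdot HF^+(-M,\mathfrak{s}(\xi))$ is the tower $\mathcal{T}^+$ when $M$ is a rational homology sphere and $\mathfrak{s}(\xi)$ is torsion, and Theorem \ref{thm_oss} then says planarity forces $c^+(\xi)$ into the tower. The genuine gap is the final step, which you defer as ``bookkeeping'': no absolute-grading computation can close the argument, because every quantity in your pipeline --- $d_3(\xi)$, $\mathrm{gr}(c^+(\xi))$, $c_1(X,J)^2$, $d(\pm M,\mathfrak{s}(\xi))$, the bottom and parity of the tower --- is invariant under conjugation $\mathfrak{s}\mapsto\bar{\mathfrak{s}}$ (i.e.\ $c_1\mapsto -c_1$) and, worse, blind to the torsion part of $c_1(X,J)$. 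The hypothesis $2c_1(X,J)\neq 0$ is compatible with $c_1(X,J)$ being a torsion class of odd order, in which case $c_1(X,J)^2 = 0$ and all your numerical constraints take exactly the same values as for a filling with $c_1 = 0$ --- values realized by genuinely planar Stein fillable structures. Indeed, if you carry out the computation, tower membership of $c^+(\xi)$ (using $b_1(X)=0$, $\sigma(X)=-b_2(X)$ from Theorem \ref{thm_etnyre}) yields precisely $c_1(X,J)^2 + b_2(X) \leq 4d(M,\mathfrak{s}(\xi))$ together with a congruence mod $8$: it \emph{reproduces} the Ozsv\'ath--Szab\'o negative-definite inequality you invoke rather than conflicting with it, and the extremal case is achieved by sharp fillings, so no contradiction is available along this route.

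What is actually needed, and what \cite{oss} uses, is a spin$^c$-refined naturality input that gradings cannot see: Plamenevskaya's theorem that for the cobordism $W = X \setminus B^4$ from $M$ to $S^3$, the induced map satisfies $F^+_{W,\mathfrak{s}}(c^+(\xi)) \neq 0$ for $\mathfrak{s}$ the canonical spin$^c$ structure $\mathfrak{s}(J)$ of the Stein structure, and $F^+_{W,\mathfrak{s}}(c^+(\xi)) = 0$ for every other spin$^c$ structure on $W$. Planarity places $c^+(\xi)$ in the deep tower; the hypotheses that $M$ is a rational homology sphere and $c_1(\mathfrak{s}(\xi)) = 0$ enter here, ensuring the tower in each grading has rank one and is preserved by the conjugation isomorphism $J$, so that $c^+(\xi)$ is $J$-invariant up to sign. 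Conjugation invariance of cobordism maps, $F^+_{W,\bar{\mathfrak{s}}} = J \circ F^+_{W,\mathfrak{s}} \circ J$, then forces $F^+_{W,\overline{\mathfrak{s}(J)}}(c^+(\xi)) \neq 0$ as well; since $2c_1(X,J) \neq 0$ is exactly the condition $\overline{\mathfrak{s}(J)} \neq \mathfrak{s}(J)$, this contradicts the vanishing statement. This also answers the question you flagged at the end: the reason the hypothesis is $2c_1(X,J)\neq 0$ rather than non-torsionness of $c_1$ is that the relevant dichotomy is whether $\mathfrak{s}(J)$ equals its conjugate --- a distinction visible only to the spin$^c$-decomposed cobordism map, never to $d_3$, $c_1^2$, or any grading.
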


\begin{obs}
Note that Corollary \ref{cor2_oss} is actually a slightly weaker version of the corollary as written in \cite{oss}, which in particular does \emph{not} require $M$ to be a rational homology sphere, and also only requires $c_1(X,J) \neq 0$. Indeed, several of our examples below provide examples in which the corollary does not follow without these extra conditions. Andr\'{a}s Stipsicz has informed us that this weaker version was in fact the intended statement.
\end{obs}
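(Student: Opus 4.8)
The plan is to establish the two non-trivial claims packaged into this observation: first, that the corollary as literally written in \cite{oss} carries the weaker hypotheses (no rational-homology-sphere assumption on $M$, and only $c_1(X,J)\neq 0$ in place of $2c_1(X,J)\neq 0$); and second, more substantively, that those hypotheses are genuinely insufficient, so that the extra conditions appearing in Corollary \ref{cor2_oss} cannot be dropped. The first claim is bibliographic. For the second, the strategy is to exhibit, among the examples constructed in the sections below, \emph{planar} contact manifolds whose Stein fillings satisfy the weaker hypotheses while directly contradicting the conclusion of non-planarity. Because such examples are planar by construction, any correct obstruction must be consistent with them; producing them therefore shows the literal statement of \cite{oss} to be false and pins down exactly which hypotheses are needed to repair it.

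To see that $c_1(X,J)\neq 0$ cannot replace $2c_1(X,J)\neq 0$, I would produce an explicit planar open book $\obd$ with $\pg=\pg_{0,b}$ supporting some $(M,\xi)$, with $M$ a rational homology sphere and $c_1(\mathfrak{s}(\xi))=0$, whose associated Stein filling $X_{\pg,\lambda}$ (for $\lambda\in Fac^{+}(\mon)$) has $c_1(X,J)$ nonzero but $2$-torsion in $H^2(X;\mathbb{Z})$. The groups $H_*(X)$ and $H_*(M)$ and the class $c_1(X,J)$ are all read off from the handle decomposition: the $2$-handles are attached along the vanishing cycles, so the intersection form and $H_1(X)$ are determined by their framed linking data, while $c_1(X,J)$ is computed from the rotation numbers of these cycles. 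The target is a $\mathbb{Z}/2$ summand of $H^2(X;\mathbb{Z})$ on which $c_1$ is nonzero; then $c_1(X,J)\neq 0$ while $2c_1(X,J)=0$, so the literal conclusion of \cite{oss} (non-planarity) is false, whereas Corollary \ref{cor2_oss}, which requires $2c_1(X,J)\neq 0$, does not apply.

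To see that the rational-homology-sphere hypothesis cannot be dropped, I would run the analogous construction with $b_1(M)>0$, again keeping $c_1(\mathfrak{s}(\xi))=0$ and now arranging $2c_1(X,J)\neq 0$: the open book is still planar, so the conclusion of \cite{oss} with its $M$-hypothesis removed would again be false, and the failure is attributable precisely to $M$ not being a rational homology sphere. The main obstacle I expect is the computation itself: one must choose a positive, genus-zero factorization whose framing and rotation-number data simultaneously force the prescribed torsion (respectively first Betti number) in $H^*(X)$ and $H_*(M)$ together with the prescribed behavior of $c_1(X,J)$, all while the monodromy stays positive and planar. Once such examples are exhibited---together with Stipsicz's confirmation that the restricted version stated above as Corollary \ref{cor2_oss} was the intended statement---the observation follows.
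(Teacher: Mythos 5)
Your plan carries all of its weight on a construction that is never carried out, and it also misreads what kind of evidence the paper actually supplies for this Observation. The paper's ``examples below'' are not bespoke planar counterexamples engineered to violate the literal statement of \cite{oss}; they are the already-present family $(\pg_{g,b},\tau_\partial)$ of Lemma \ref{lem_calculations}, supporting contact structures on the circle bundles $Y_{g,-b}$. For $g\geq 1$ these are \emph{not} planar --- indeed Corollary \ref{cor_bounded} proves many of them non-planar --- so they cannot play the role you assign them (``planar by construction \ldots directly contradicting the conclusion''). The way they bear on the Observation is different: by Lemma \ref{lem_calculations} they satisfy the weakened hypotheses (e.g.\ for $g=1$, $b>1$ one has $c_1(X,J)\neq 0$ and $c_1(\mathfrak{s}(\xi))=0$, while $M=Y_{1,-b}$ has $b_1=2$ and so is not a rational homology sphere), and by the observation following Corollary \ref{cor_bounded} one has $HF_{red}(Y_{g,-b})=0$ for $b\geq 2g-1$, so the conclusion of Theorem \ref{thm_oss} holds vacuously for \emph{every} contact structure on these manifolds. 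Hence the corollary, stripped of the extra conditions, cannot follow from Theorem \ref{thm_oss} on these examples --- non-planarity there is detected only by the methods of this paper. The only genuinely planar members of the family are the $g=0$ open books $(\pg_{0,b},\tau_\partial)$ on lens spaces, where the same rotation-number computation gives $c_1(X,J)\neq 0$ for $b\geq 3$ but $PD(c_1(\xi))=(b-2)d_2$ nonzero torsion rather than zero; these illustrate the sharpness of the hypothesis $c_1(\mathfrak{s}(\xi))=0$, not of the two conditions you target. In short, the paper's justification is its existing computations plus the Stipsicz communication; nothing new is built, and no planar example violating the weak hypotheses' conclusion is claimed.

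By contrast, your argument requires exhibiting (a) a planar $(M,\xi)$ with $M$ a rational homology sphere, $c_1(\mathfrak{s}(\xi))=0$, and a Stein filling whose $c_1$ is nonzero $2$-torsion, and (b) a planar example with $b_1(M)>0$, $c_1(\mathfrak{s}(\xi))=0$, and $2c_1(X,J)\neq 0$. You construct neither, and you yourself flag ``the computation itself'' as the main obstacle --- but that computation \emph{is} the entire content of your approach; without it the proposal is a research program, not a proof. Worse, your targets are strictly harder than what the Observation needs, and it is not clear they are achievable: by Theorem \ref{wendl} any such filling must arise as $X_{\pg,\lambda}$ for a positive factorization $\lambda$ of a planar monodromy, with $e+\sigma$ pinned down by Theorem \ref{thm_obstruction}, and realizing nonzero $2$-torsion $c_1(X,J)$ subject to $c_1(\mathfrak{s}(\xi))=0$ under these constraints is an open-ended problem the paper nowhere solves. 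The genuine gap, then, is twofold: the decisive examples are asserted rather than produced, and the weaker (and sufficient) reading of ``the corollary does not follow'' --- that the weakened hypotheses are satisfied by examples on which the Heegaard Floer obstruction is provably vacuous --- is available from the paper's own Lemma \ref{lem_calculations} without any new construction.
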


Now, given $(M,\xi)$ with open book $\obd$, denote by $\mathcal{C}_{\obd}$ the set $\{ (\pg',\mon') | \pg \subset \pg'$, and $ \mon'$ contains $\mon \}$. Thus, for each element of $\mathcal{C}_{\obd}$, there is a Stein cobordism from $(M,\xi)$ to the supported contact 3-fold (given by attaching 1-handles, and 2-handles along Legendrian knots - observe that via chain relations we may assume all Dehn twists are about homologically non-trivial curves). Now, for each of the three obstructions, if $(M,\xi)$ can be shown to be non-planar, then the same is true of each $(M',\xi')$ supported by an element of $\mathcal{C}_{\obd}$.  Indeed, for our obstruction this is trivial. For the others, in \cite{et1} it follows from (the construction in the proof of) the main theorem, while for \cite{oss} the result follows from properties of the map $F^+_{-X}$ induced by the cobordism $X$. 

We will use our examples coming from the `bounding' configurations of the previous section as a test case for our comparison. We will denote the mapping class which factorizes into a Dehn twist about each boundary component of $\pg$ by $\tau_{\partial \pg}$ (or simply $\tau_\partial$ if there is no ambiguity). Then $\mathcal{C}_{(\pg_{g,b},\tau_{\partial})}$ is exactly $\{\obd \ | \ \mon$ bounds $\pg_{g,b} \subset \pg\}$. We have:

\begin{cor}\label{cor_bounded} Let $(M,\xi)$ be supported by $\obd$ with $\mon \in Dehn^+(\pg)$. Suppose $\mon $ bounds $\pg_{g,b}$, where one of the following holds:
\begin{enumerate}
\item $b \in \{1,2\}$
\item $g=1, b\leq 9$
\item $g=2, b \leq 8$
\end{enumerate}
Then $\xi$ is not supported by a planar open book decomposition.
\end{cor}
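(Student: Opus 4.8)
The plan is to deduce Corollary~\ref{cor_bounded} directly from Corollary~\ref{cor_obstruction_restatement}: in each of the three cases it suffices to exhibit an allowable relator $r$ that $\mon$ admits and for which $I(r)+n(r)\neq 0$. The first step is to translate the hypothesis that $\mon$ bounds $\pg_{g,b}$ into the language of admitted relators. By definition, $\mon$ bounds $\pg_{g,b}$ exactly when some subsurface $\pg'\cong\pg_{g,b}\subset\pg$ has the property that $\mon$ contains the positive multitwist $\tau_{\partial\pg'}$ along the boundary components of $\pg'$. Each relator I use will be supported on such a $\pg'$ and can be written $r=\lambda_1^{-1}\lambda_2$ with $\lambda_1=\tau_{\partial\pg'}$; with this convention the statement ``$\mon$ bounds $\pg_{g,b}$'' becomes precisely ``$\mon$ admits $r$,'' so the hypothesis of Corollary~\ref{cor_obstruction_restatement} is satisfied once a suitable $r$ is located.

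Next I produce the relators case by case. In case (1), when $\mon$ bounds $\pg_{g,1}$ or $\pg_{g,2}$, I invoke the chain relations of Lemma~\ref{lem_chain} together with the reduction recorded in Section~\ref{sec_obstructions}, which shows that bounding either of these forces $\mon$ to bound $\pg_{1,2}$; the relator I then use is the $3$-chain relator of Example~\ref{ex_2_chain}, supported on a regular neighborhood of a chain of three curves. In cases (2) and (3) I appeal instead to the explicit relators supported on $\pg_{1,b}$ for $b\le 9$ and on $\pg_{2,b}$ for $b\le 8$, constructed by Korkmaz~\cite{ko} and Onaran~\cite{o} respectively.

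The third step is the computation of the invariant, which is uniform across all three cases. Each of these relators decomposes as a concatenation of a single $2$-chain relator with some number $k$ of lantern relators. Hence, by Lemma~\ref{lem_endo_calculations} and the additivity of $I$ and $n$ noted immediately after it,
$$I(r)+n(r) = (-7+11) + k\,(1-1) = 4 \neq 0.$$
Corollary~\ref{cor_obstruction_restatement} then immediately yields that $(M,\xi)$ is not supported by a planar open book decomposition, completing each case.

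The main obstacle is confirming that the relators are genuinely \emph{allowable}, i.e.\ that every curve occurring in $r$ is homologically non-trivial in $\pg$; this is what guarantees that the two factorizations related by the $r$-substitution both define Stein (and not merely Lefschetz) fillings, as required for the passage from Theorem~\ref{thm_obstruction} to Corollary~\ref{cor_obstruction_restatement}. The interior chain curves, and more generally the component curves of the Korkmaz and Onaran relators, are non-separating and cause no difficulty. The delicate curves are the boundary twists $\tau_{\partial\pg'}$: a single boundary curve $\partial\pg_{g,1}$ bounds the genus-$g$ subsurface and is therefore null-homologous in $\pg$, so the naive one-boundary chain relator can never be allowable. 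This is exactly why the reduction of Section~\ref{sec_obstructions} passes to a configuration with \emph{two} boundary components, for which the two boundary curves carry opposite and generically non-zero homology classes. I expect the heart of the write-up to be verifying that this reduction can be carried out so that the resulting boundary curves are genuinely homologically essential in $\pg$, ensuring that the relator is allowable and Corollary~\ref{cor_obstruction_restatement} applies.
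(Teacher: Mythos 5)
Your argument covers cases (2), (3), and case (1) with $g\geq 2$, and on those cases it is essentially the paper's own route (the reductions of Section \ref{sec_obstructions} fed into Corollary \ref{cor_obstruction_restatement}). But it has a genuine gap at $g=b=1$, which your case (1) silently absorbs. The reduction you invoke is stated in the paper only for $g>1$, and that restriction is essential: it works by substituting the chain relator of Lemma \ref{lem_chain} for the boundary multitwist and then locating an odd sub-chain of length at least $3$ inside the resulting chain, which requires the chain to have length $2g\geq 3$ (resp.\ $2g+1\geq 3$ with $g \ge 2$ to drop genus). On $\pg_{1,1}$ the chain has length two, the only relator available is the 2-chain $\tau_\delta^{-1}(\tau_\alpha\tau_\beta)^6$, and $\delta=\partial\pg_{1,1}$ bounds the genus-one subsurface and is therefore null-homologous in $\pg$ --- exactly the allowability failure your final paragraph identifies, except that here there is no two-boundary configuration to pass to. So $\mon$ bounding $\pg_{1,1}$ cannot be handled through Corollary \ref{cor_obstruction_restatement} at all, and no care about verifying allowability will repair this within your framework.

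The paper closes the $g=b=1$ case by an entirely different mechanism, which your proposal is missing: as in \cite{eo}, the open book $(\pg_{1,1},(\tau_{a_1}\tau_{a_2})^5)$ supports the unique tight contact structure on the Poincar\'e homology sphere $\Sigma(2,3,5)$, which is non-planar by Etnyre's Theorem \ref{thm_etnyre}; since $\tau_\partial=(\tau_{a_1}\tau_{a_2})^6$, any $\mon$ bounding $\pg_{1,1}$ contains $(\tau_{a_1}\tau_{a_2})^5$, i.e.\ $\mathcal{C}_{(\pg_{1,1},\tau_\partial)}\subset\mathcal{C}_{(\pg_{1,1},(\tau_{a_1}\tau_{a_2})^5)}$, and non-planarity propagates up $\mathcal{C}_{\obd}$ via the Stein cobordism discussion of Section \ref{sec_comparisons}. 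Two smaller points: your uniform computation $I(r)+n(r)=4$ rests on the claim that \emph{every} relator used decomposes as one 2-chain plus lanterns, which the paper asserts only for the Korkmaz--Ozbagci genus-one relators; for Onaran's genus-two relators the paper records only the direct check $I(r)+n(r)\neq 0$, so you should not assume the decomposition there. And your promised verification that the boundary curves $\delta_1,\delta_2$ are homologically essential remains a promissory note --- an individual boundary component of $\pg'\subset\pg$ can perfectly well be null-homologous in $\pg$ (if it bounds on the outside), so some patching (e.g.\ further chain substitutions, noting that $I+n$ only accumulates positive multiples of $4$) would be needed; the paper does not make this explicit either, but unlike the $g=b=1$ case it is repairable.
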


\begin{proof}
The only case not covered by the results of Section \ref{sec_obstructions} is $g=b=1$. We may however, as in \cite{eo}, observe that $(\pg_{1,1},(\tau_{a_1}\tau_{a_2})^5)$ supports the unique tight structure on the Poincar\'e homology sphere $\Sigma(2,3,5)$, which is non-planar by \cite{et1}. Then $\mathcal{C}_{(\pg_{1,1},\tau_{\partial})} \subset \mathcal{C}_{(\pg_{1,1},(\tau_{a_1}\tau_{a_2})^5)}$, so we are done.
\end{proof}

\begin{obs}
As we will see, the 3-manifold corresponding to the minimal element $(\pg_{g,b},\tau_\partial)$ in each above case is the circle bundle $Y_{g,-b}$ of Euler number $-b$ over a closed surface of genus $g$. The Heegaard Floer groups of these manifolds have been particularly well studied. In particular, it follows from e.g. Theorem 5.6 of \cite{osz} that for $b\geq 2g-1$, $U \cdot HF^+(Y_{g,-b}) = HF^+(Y_{g,-b})$ (i.e. $HF_{red}(Y_{g,-b})=0$). Thus the obstruction of Theorem \ref{thm_oss} (and its corollaries) does not apply to any contact structure on these manifolds. On the other hand, by Lemma \ref{lem_calculations}, we see that Theorem \ref{thm_etnyre} applies only in the case $g=b=1$. We find then that for e.g. $g=1$ and $1<b \leq 9$, or for $g=2$ and $3\leq b \leq 8$, the contact structure supported by $(\pg_{g,b},\tau_\partial)$ can be shown non-planar only by our methods. 
\end{obs}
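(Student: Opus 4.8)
The plan is to prove the Observation in three independent stages: identify the underlying $3$-manifold, show that the Heegaard Floer obstruction is automatically satisfied, and pin down exactly when Etnyre's obstruction is triggered. For the first stage I would compute the manifold supported by the minimal open book $(\pg_{g,b},\tau_\partial)$ by starting from the trivial monodromy. For $(\pg_{g,b},\mathrm{id})$ the mapping torus is $\pg_{g,b}\times S^1$, and capping each of the $b$ torus boundaries by a solid torus (gluing its meridian to a boundary curve of the page) fills the pages to closed genus-$g$ surfaces, producing the trivial circle bundle $Y_{g,0}=\pg_g\times S^1$. I would then track the insertion of the $b$ right-handed boundary-parallel Dehn twists making up $\tau_\partial$: each such twist shears the gluing of the corresponding solid torus and lowers the Euler number by one, so the $b$ positive twists yield $Y_{g,-b}$. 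Equivalently, and more usefully for the sequel, I would identify the canonical Lefschetz filling $X_{\pg_{g,b},\tau_\partial}$ with the disk bundle over $\pg_g$ of Euler number $-b$, whose zero section is the capped-off page; this records that its intersection form is $(-b)$, in agreement with Lemma \ref{lem_calculations}, and that its boundary is $Y_{g,-b}$.

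For the second stage I would feed $b\geq 2g-1$ into Theorem 5.6 of \cite{osz} to get $U\cdot HF^+(Y_{g,-b})=HF^+(Y_{g,-b})$. Using the standard splitting $HF^+\cong\mathcal{T}^+\oplus HF_{red}$ over each spin$^c$ structure, where $\mathcal{T}^+=\mathbb{Z}[U,U^{-1}]/U\cdot\mathbb{Z}[U]$ carries a surjective $U$-action and the finitely generated group $HF_{red}$ a nilpotent one, surjectivity of $U$ forces $HF_{red}=0$. The duality $HF^+(-Y)\cong HF^-(Y)$ then yields $HF_{red}(-M)=0$ as well, so $HF^+(-M)$ is a sum of towers and $\bigcap_d U^d\cdot HF^+(-M)=HF^+(-M)$. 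Hence the necessary condition of Theorem \ref{thm_oss} holds for \emph{every} contact structure and can never certify non-planarity. For the corollaries, Corollary \ref{cor2_oss} fails immediately since $b_1(Y_{g,-b})=2g>0$ means $M$ is not a rational homology sphere; and for Corollary \ref{cor1_oss} I would note that $\mathcal{T}^+$ occurs only in torsion spin$^c$ structures, so $HF^+(-M,\mathfrak{s})=HF_{red}(-M,\mathfrak{s})=0$ for non-torsion $\mathfrak{s}$, forcing $c^+(\xi)=0$ whenever $\mathfrak{s}(\xi)$ is non-torsion and making its hypotheses unsatisfiable.

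For the third stage, from the invariants of Lemma \ref{lem_calculations} I would verify that the canonical filling has $b_2^+=b_2^0=0$, connected boundary $Y_{g,-b}$, and that $Y_{g,-b}$ is never an integral homology sphere for $g\geq1$ (again because $b_1=2g>0$); thus every hypothesis of Theorem \ref{thm_etnyre} holds and it yields no direct obstruction in any listed case. The one case where an Etnyre-based argument nonetheless succeeds is $g=b=1$, and only indirectly: as in the proof of Corollary \ref{cor_bounded}, $(\pg_{1,1},\tau_\partial)\in\C_{(\pg_{1,1},(\tau_{a_1}\tau_{a_2})^5)}$, whose base supports the Poincar\'{e} sphere, which Etnyre does obstruct. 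The delicate point, which I expect to be the main obstacle, is justifying the word \emph{only} for the ranges $g=1,\ 1<b\leq9$ and $g=2,\ 3\leq b\leq8$: one must exclude not merely the direct application but every indirect one, i.e.\ confirm that no base configuration $(\pg'',\mon'')$ with $(\pg_{g,b},\tau_\partial)\in\C_{(\pg'',\mon'')}$ supports an Etnyre- or Ozsv\'{a}th--Stipsicz--Szab\'{o}-detectable manifold. Here I would argue that all such bases again support circle-bundle-type manifolds falling under the same Floer vanishing, so neither competing obstruction can fire, whereas the relator invariant $I(r)+n(r)\neq0$ continues to detect non-planarity via Corollary \ref{cor_obstruction_restatement}. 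Combining the three stages gives the Observation.
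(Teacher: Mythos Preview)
Your proposal is essentially correct and tracks the paper's reasoning closely, since the Observation itself carries no formal proof; it is justified by the surrounding Lemma~\ref{lem_calculations} together with the cited Heegaard Floer computation. There are, however, two points of genuine divergence worth flagging.

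First, in Stage~1 you identify $M\cong Y_{g,-b}$ by a direct mapping-torus argument, tracking how each boundary-parallel twist shears the solid-torus gluing and decrements the Euler number, and you identify the filling with the disk bundle directly. The paper instead performs this identification inside the proof of Lemma~\ref{lem_calculations} via Kirby calculus: the Lefschetz handle description of Figure~\ref{fig_borromean_knot}(a) is simplified by handle slides and cancellations to the Borromean-knot diagram of Figure~\ref{fig_borromean_knot}(b), from which one reads off both the circle-bundle structure and $Q_X=\langle -b\rangle$. Your approach is more conceptual and avoids the diagrammatic work; the paper's approach has the advantage of simultaneously producing the intersection-form and Chern-class data needed for the Etnyre comparison.

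Second, your Stage~3 takes the word \emph{only} more seriously than the paper does. The paper's claim is simply that, for the listed ranges, the obstructions of Theorems~\ref{thm_etnyre} and~\ref{thm_oss} (and the stated corollaries) do not apply to the canonical filling and the computed Floer groups; it does not attempt to exclude every conceivable indirect application through some smaller base in $\mathcal{C}$. Your proposed argument for this stronger exclusion---that all such bases ``again support circle-bundle-type manifolds falling under the same Floer vanishing''---is not substantiated and is likely false in general, since subsurfaces of $\Sigma_{g,b}$ with sub-monodromies of $\tau_\partial$ need not yield circle bundles. This is not a gap relative to the paper, but you should recognize that the paper is making the weaker (and defensible) claim, not the stronger one you are trying to prove.
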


For completeness, we investigate the minimal element of each set $\mathcal{C}_{(\pg_{g,b},\tau_{\partial})}$:

\begin{lem}\label{lem_calculations}
Let $(M,\xi)$ be supported by $(\pg_{g,b},\tau_\partial)$, where $b>1$, and let $(X,J)$ be the Stein filling given by the Lefschetz fibration $X_{\pg_{g,b},\tau_{\partial}}$. Then $Q_X = <-b>$. Furthermore,
\begin{enumerate}

\item If $b=2$ and $g>0$, or if $b>1$ and $g=1$, then $c_1(X,J) \neq 0$, while $c_1(\mathfrak{s}(\xi)) = 0$.

\item If $g>1$ and $b>2$, then  $c_1(X,J) \neq 0$, and $c_1(\mathfrak{s}(\xi))$ is non-zero exactly when $2g-2 \not\equiv 0(mod(b))$, in which case it is torsion.

\end{enumerate}
\end{lem}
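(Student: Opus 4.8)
The plan is to recognise $X = X_{\pg_{g,b},\tau_\partial}$ as a familiar $4$-manifold and then read off its invariants. First I would analyse the handle decomposition: $X$ is built from $\pg_{g,b}\times D^2$ by attaching one $2$-handle along each boundary-parallel vanishing cycle $\delta_1,\dots,\delta_b$ with framing $-1$ relative to the page framing. Since $\pg_{g,b}\times D^2$ deformation retracts to a wedge of $2g+b-1$ circles and the classes $[\delta_1],\dots,[\delta_b]$ in $H_1(\pg_{g,b})$ satisfy the single relation $\sum_i[\delta_i]=0$, a direct computation with the cellular chain complex gives $H_1(X)\cong\mathbb{Z}^{2g}$ and $H_2(X)\cong\mathbb{Z}$, the latter generated by the class $[\widehat F]$ of the fiber capped off by the $b$ handle cores, a closed surface of genus $g$. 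The framing data then yield $[\widehat F]^2=\sum_i(-1)=-b$, so $Q_X=\langle -b\rangle$. In fact this identifies $X$ smoothly with the disk bundle of Euler number $-b$ over $\Sigma_g$, whence $\partial X = M$ is the circle bundle $Y_{g,-b}$.

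Next I would compute $c_1(X,J)$. Because the Gompf symplectic structure on a Lefschetz fibration makes $\widehat F$ a closed symplectic surface, adjunction gives
$$c_1(X,J)\cdot[\widehat F] = \chi(\widehat F) + [\widehat F]^2 = (2-2g) - b.$$
Since $H_2(X)$ is free of rank one and $H^2(X)\cong\mathrm{Hom}(H_2(X),\mathbb{Z})\cong\mathbb{Z}$ (the $\mathrm{Ext}$ term vanishes as $H_1$ is free), the Kronecker pairing is perfect, so $c_1(X,J)=0$ iff $2-2g-b=0$. In every case of the statement one has $g\geq 1$ and $b>1$, hence $2-2g-b<0$ and $c_1(X,J)\neq 0$; this settles the Chern-class claims for $X$ in both (1) and (2).

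For the contact structure I would use $c_1(\mathfrak s(\xi)) = i^*c_1(X,J)$, where $i\colon M\hookrightarrow X$, and track the restriction $i^*\colon H^2(X)\to H^2(M)$. A standard computation (e.g. via the Gysin sequence, or Poincar\'e duality) gives $H^2(M)\cong\mathbb{Z}^{2g}\oplus\mathbb{Z}/b$. Writing $u$ for the generator of $H^2(X)\cong\mathbb{Z}$ dual to $[\widehat F]$, so that $c_1(X,J)=(2-2g-b)\,u$, the long exact sequence of the pair $(X,M)$ together with Lefschetz duality identifies the map $H^2(X,M)\to H^2(X)$ with the intersection form $\langle -b\rangle$; hence $\ker i^* = b\cdot H^2(X)$ and $i^*(u)$ has order exactly $b$, generating the torsion summand $\mathbb{Z}/b$. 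Therefore, using $b\cdot i^*(u)=0$,
$$c_1(\mathfrak s(\xi)) = (2-2g-b)\,i^*(u) = (2-2g)\,i^*(u),$$
a torsion class that vanishes precisely when $b\mid 2g-2$. Matching cases: for $b=2$ with any $g$, and for $g=1$ with any $b$, one has $2g-2\equiv 0\pmod b$, so $c_1(\mathfrak s(\xi))=0$, giving (1); and for $g>1,\ b>2$ it is nonzero exactly when $2g-2\not\equiv 0\pmod b$ and is always torsion, giving (2).

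The main obstacle I anticipate is making the second and third steps rigorous rather than the bookkeeping: specifically, justifying that the capped-off fiber $\widehat F$ genuinely is a symplectic surface representing the generator of $H_2$ (so that adjunction applies with the correct sign), and verifying that $i^*(u)$ has order exactly $b$, i.e. that the restriction carries the generator of $H^2(X)$ onto a \emph{generator} of the torsion $\mathbb{Z}/b\subset H^2(M)$ rather than a proper multiple. Both points reduce to the clean smooth identification of $X$ with the disk bundle over $\Sigma_g$ of Euler number $-b$, after which the Euler-class computations are standard and the remaining case analysis is routine.
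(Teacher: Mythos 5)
Your topological skeleton is sound and in places cleaner than the paper's: the handle-decomposition homology computation, $Q_X=\langle -b\rangle$, the identification of $\partial X$ with the circle bundle $Y_{g,-b}$, and the exact-sequence argument showing $\ker i^* = b\cdot H^2(X)$, so that $i^*(u)$ has order exactly $b$ and generates the $\mathbb{Z}/b$ torsion of $H^2(M)$, all check out, and your case analysis reproduces the paper's conclusions (the paper instead presents $H_1(M;\mathbb{Z})$ by generators and relations, following Etnyre--Ozbagci, and works with $PD(c_1(\xi))=\sum_i r(\alpha_i)[\mu_i]$). The genuine gap is the step you yourself flagged: the evaluation $\langle c_1(X,J),[\widehat F]\rangle = 2-2g-b$ via adjunction. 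The capped-off fiber $\widehat F$ is the union of a subsurface of a regular fiber with the $b$ Lefschetz thimbles, and thimbles are not symplectic; Gompf's theorem that fibers are symplectic concerns closed fibers of fibrations over closed bases, whereas here every fiber has boundary and no closed fiber exists. For a Stein domain one has in general only the adjunction \emph{inequality} (Lisca--Mati\'c), which for $\widehat F$ gives $-b + \left|\langle c_1(X,J),[\widehat F]\rangle\right| \leq 2g-2$, an upper bound that in particular cannot establish $c_1(X,J)\neq 0$. Moreover, your proposed repair --- reducing everything to the smooth identification of $X$ with the disk bundle --- cannot work as stated: $c_1(X,J)$ depends on the Stein structure $J$, not merely on the underlying smooth $4$-manifold, so a diffeomorphism to the disk bundle does not determine it. You would need a Stein deformation equivalence with the holomorphic model (the degree $-b$ line bundle over a genus $g$ curve, whose zero section is holomorphic), which is essentially as much work as a direct computation.

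The paper closes exactly this gap by computing $c_1(X,J)$ from the Stein handle picture: it realizes the vanishing cycles as Legendrian curves on distinct pages of the trivial open book of $\#^{2g+b-1}(S^1\times S^2)$ and reads off rotation numbers $r(d_1)=0$, $r(d_i)=1$ for $2\le i\le b-1$, and $r(d_b)=2g$, whose sum $2g+b-2$ is (up to sign and orientation conventions) your $\langle c_1(X,J),[\widehat F]\rangle$ --- confirming both your numerical value and $c_1(X,J)\neq 0$. If you substitute this rotation-number computation for the adjunction step, the remainder of your argument --- the perfect Kronecker pairing on the free $H^2(X)$, and $c_1(\mathfrak{s}(\xi)) = i^*c_1(X,J) = (2g-2)\,i^*(u)$, a torsion class vanishing precisely when $b \mid 2g-2$ --- goes through and is a legitimate, arguably tidier, alternative to the paper's Poincar\'e-dual bookkeeping in $H_1(M;\mathbb{Z})$.
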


\begin{proof}
Observe firstly that $M$ and $X$ are determined by the surgery diagram in Figure \ref{fig_borromean_knot}; in particular one may read off the intersection form $Q_X$ from the `Borromean knot' description in Figure \ref{fig_borromean_knot}(b). Of course $M$ is the circle bundle over the closed surface of genus $g$ with Euler number $-b$.

\begin{figure}[htb]
\labellist
\small\hair 2pt
 \pinlabel {$-1$} [ ] at 40 31
 \pinlabel {$-1$} [ ] at 70 31
 \pinlabel {$-1$} [ ] at 125 31
 \pinlabel {$-1$} [ ] at 287 19
 \pinlabel {$-b$} [ ] at 368 17
 \pinlabel {\LARGE{$\leadsto$}} [ ] at 306 39
 \pinlabel {$(a)$} [ ] at 113 -3
 \pinlabel {$(b)$} [ ] at 400 -3
\endlabellist
\centering
\includegraphics[scale=.8]{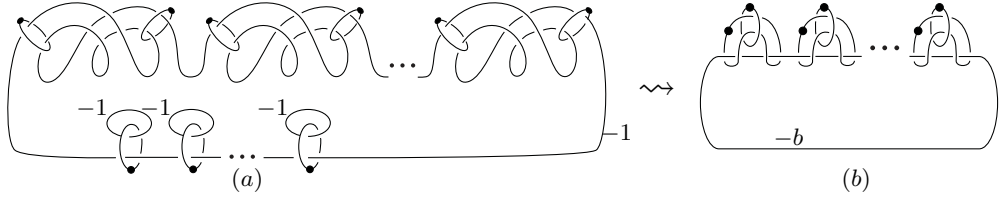}
\caption{A surgery diagram for the handle attachment determined by $(\pg_{g,b},\tau_\partial)$. Dotted circles represent 1-handles (see \cite{gs} for details). In (a), we have drawn the diagram so as to make the open book decomposition apparent. The link in (b) (the `Borromean knot') is then obtained by handle slides and cancellations.}
\label{fig_borromean_knot}
\end{figure}

For the numbered statements, each follows easily from the methods of \cite{eo}. As in that paper (though with slightly different notation), we let $a_1,\ldots,a_{2g},d_1,\ldots,d_b$ be the standard generators of $H_1(\pg)$, where the $d_i$ correspond to curves parallel to the boundary components. Choose a component $\partial_1$ of $\partial \pg$, and connect each remaining boundary component $\partial_i$, $2 \leq i \leq b$ to $\partial_1$ with an arc $\sigma_i$. Then, as all mapping classes under consideration can be factored as boundary-parallel twists, we have $$H_1(M;\mathbb{Z}) = <a_i, d_j | \sigma_j - \mon(\sigma_j) >,  $$ where $i=1,\ldots,2g$, $j=2,\ldots,b$. Of course $d_1 = -(d_2+\cdots+d_b)$ in $H_1(\pg)$. It is then easy to see that in each case, the only relations are $d_j = d_k$ for all $j,k$, and $b\cdot d_j=0$.

For the Chern classes, again following \cite{eo}, we will view the curves involved in a given positive factorization $\lambda = \fac$ of $\mon$ into homologically non-trivial curves as embedded in distinct pages of the trivial open book of $\#^{2g+b-1}(S^1 \times S^2)$, whose supported contact structure is filled by the unique Stein structure on $\#^{2g+b-1}(S^1 \times D^3)$ (Figure \ref{fig_flat_book}). We may then calculate each rotation number $r(\alpha_i)$ (of a Legendrian realization of $\alpha_i$) as the winding number of $\alpha_i$ with respect to a standard trivialization of the tangent bundle of the page. In our specific situation, with the curves labeled as in Figure \ref{fig_flat_book}, we see that $r(d_1)=0, r(d_i) = 1$ for $2\leq i \leq b-1$, and finally $r(d_b) = 2g$. In particular, $c_1(X,J) \neq 0$. Furthermore, $c_1(\xi)$ is the restriction of $c_1(X,J)$ to the boundary $M$, and its Poincar\'e dual is given by:
$$ PD(c_1(\xi)) = \sum_{i=1}^b r(\alpha_i)[\mu_i] $$  
where $\mu_i$ is the meridian of $\alpha_i$.

\begin{figure}[htb]
\labellist
\small\hair 2pt

 \pinlabel {$d_b$} [ ] at 333 12
 \pinlabel {$d_{b\textrm{-}1}$} [ ] at 377 9
 \pinlabel {$d_1$} [ ] at 464 9
\endlabellist
\centering
\includegraphics[scale=.8]{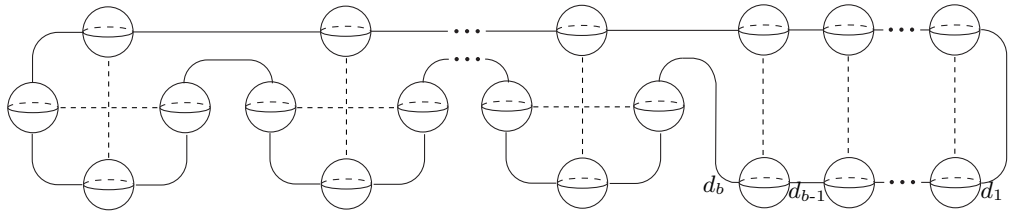}
\caption{$D^4$ union $(2g+b-1)$ 1-handles. The solid lines indicate the boundary components of $\pg_{g,b}$, the dashed lines generators of $H_1(\pg_{g,b})$}
\label{fig_flat_book}
\end{figure}

Putting all of this together, we see that if $b=2$ and $g>0$, then $PD(c_1(\xi))= $$2g \cdot d_2$$ = 0$ in $H_1(M;\mathbb{Z})$; if $b>1$ and $g=1$ then $PD(c_1(\xi))=d_2+d_3+ \cdots + d_{b-1}+2d_b = b\cdot d_2 = 0$; and finally if $g>1$ and $b>2$, then  $PD(c_1(\xi))=d_2+d_3+ \cdots + d_{b-1} + 2gd_b =(2g+b-2)d_2 = (2g-2)d_2$.
\end{proof}

To conclude then, we find that if $\obd$ is supported by $(\pg_{g,b},\tau_\partial)$, then in the cases not considered in Corollary \ref{cor_bounded}, there is no obvious obstruction to planarity, where by `obvious' we mean that neither the explicit curve configurations of this paper, nor Theorem \ref{thm_etnyre} or Corollaries \ref{cor1_oss} and \ref{cor2_oss} apply. Of course it is possible that in some cases either a curve configuration or explicit Heegaard Floer calculation would allow one to apply either our results or Theorem \ref{thm_oss}.


\begin{thebibliography}{99999}
\bibitem{ao}
S. Akbulut, B. Ozbagci, \emph{Lefschetz fibrations on compact Stein surfaces}, Geom.
Topol. 5 (2001), 319-334.

\bibitem{ao2}
S. Akbulut and B. Ozbagci,\emph{ On the topology of compact Stein surfaces}, Int. Math. Res. Not. 2002, no.
15, 769-782.

\bibitem{el}
Y. Eliashberg, \emph{Legendrian and transversal knots in tight contact three manifolds}, Topological methods in modern mathematics (Stony Brook (1991))

\bibitem{eg}
H. Endo and Y. Gurtas, \emph{Lantern relations and rational blowdowns}, Proc. Amer. Math Soc., Vol 138, No 3 (2008) 1131-1142. \href{http://front.math.ucdavis.edu/0808.0386}{arXiv:0808.0386}

\bibitem{en}
H. Endo and S. Nagami, \emph{Signature of relations in mapping class groups and non-holomorphic Lefschetz fibration} Trans. Amer. Math. Soc. 357 (2005), 3179-3199

\bibitem{et1}
J. B. Etnyre, {\em Planar open book decompositions and contact
structures,} IMRN {\bf 79} (2004), 4255-4267. \href{http://front.math.ucdavis.edu/0404.5267}{arXiv:math/0404267v3}

\bibitem{eo}
J. Etnyre and B. Ozbagci, \emph{Invariants of contact structures from open books}, Trans. Amer. Math. Soc. 360
(2008) 3133-3151.


\bibitem{g}
S. Gervais, \emph{Presentation and central extensions of mapping class groups} Trans. Amer. Math. Soc. 348 (1996), 3097-3132

\bibitem{gi}
E. Giroux, \emph{G\'{e}ometrie de contact: de la dimension trois
vers les dimensions sup\'{e}rieures,} Proceedings of the
International Congress of Mathematicians (Beijing 2002), Vol. II,
405-414. \href{http://front.math.ucdavis.edu/0305.5129}{arXiv:math/0305129v1}



\bibitem{gs}
R. Gompf and A. Stipsicz, \emph{4-manifolds and Kirby calculus}, Grad. Stud. in Math. AMS,
1999.

\bibitem{ko}
M. Korkmaz and B. Ozbagci, \emph{On Sections of Elliptic Fibrations}
Michigan Math. J. 56 (2008)

\bibitem{lp}
A. Loi, R. Piergallini, \emph{Compact Stein surfaces with boundary as branched covers of $B^4$},
Invent. Math. 143 (2001), 325-348. \href{http://front.math.ucdavis.edu/0002.5042}{arXiv:math/0002042v1}


\bibitem{l}
F. Luo, \emph{A presentation of the mapping class groups} Math. Res. Lett. 4 (1997), no. 5, 735-739


\bibitem{m}
W. Meyer, \emph{Die Signatur von Fl\"{a}chenb\"{u}ndeln}, Math. Ann. 201 (1973), 239-264.

\bibitem{nw}
K. Niederkrüger and C. Wendl, \emph{Weak Symplectic Fillings and Holomorphic Curves}, 2010. \href{http://arxiv.org/abs/1003.3923v2}{arXiv:1003.3923}

\bibitem{o}
S. Onaran, \emph{On sections of genus two Lefschetz fibrations}, Pac. Journal of Math Vol. 248, no 1, (2010) pp. 203-216 \href{http://front.math.ucdavis.edu/0804.2237}{arXiv:0804.2237}


\bibitem{os}
B. Ozbagci and A. Stipsicz, \emph{Surgery on contact 3-manifolds and Stein surfaces}, Bolyai
Soc. Math. Stud., Vol. 13, Springer, 2004. MR2114165

\bibitem{oss}
P. Ozsvath, A. Stipsicz and Z. Szabo, \emph{Planar open books and Floer homology}, Int. Math. Res. Not.
2005, no. 54, 3385-3401.


\bibitem{osz}
P. Ozsvath, Z. Szabo, \emph{Knot Floer homology and integer surgeries},	Alg. and Geom. Topol. 8 (2008) 101-153 \href{http://de.arxiv.org/abs/math/0410300v2}{arXiv:math/0410300v2}

\bibitem{waj}
B. Wajnryb, \emph{An elementary approach to the mapping class group of a surface}.
Geom. Topol. 3 (1999), 405-466.

\bibitem{wa}
C.T.C. Wall, \emph{Non-additivity of the signature}.
Invent. Math. 7 1969 269-274.

\bibitem{wand} A. Wand, \emph{Factorizations of diffeomorphisms of compact surfaces with boundary}. \href{http://front.math.ucdavis.edu/0910.5691}{arXiv:0910.5691}



\bibitem{we}
C. Wendl, \emph{Strongly fillable contact manifolds and J-holomorphic foliations}, Duke Math. J. 151 (2010), no. 3, 337-384. \href{http://arxiv.org/abs/0806.3193}{arXiv:0806.3193}





\end{thebibliography}
\end{document}